\documentclass[12pt]{amsart}
\usepackage[nobysame,abbrev,alphabetic]{amsrefs}
\usepackage{amssymb}
\usepackage[all]{xy}
\usepackage{tikz-cd}
\usepackage{mathtools}
\usepackage{xcolor}

\DeclareMathOperator{\Gal}{Gal}

\DeclareMathOperator{\id}{id}
\DeclareMathOperator{\Img}{Im}

\DeclareMathOperator{\Ker}{Ker}

\DeclareMathOperator{\res}{res}

\DeclareMathOperator{\C}{C}
\DeclareMathOperator{\Ho}{H}
\DeclareMathOperator{\B}{B}
\DeclareMathOperator{\Z}{Z}
\DeclareMathOperator{\aug}{aug}
\DeclareMathOperator{\acup}{\cup_{aug}}
\DeclareMathOperator{\Pic}{Pic}
\DeclareMathOperator{\Br}{Br}
\DeclareMathOperator{\Div}{Div}
\DeclareMathOperator{\fdiv}{div}

\DeclareMathOperator{\Deg}{deg}
\DeclareMathOperator{\incl}{incl}
\DeclareMathOperator{\cores}{cor}

\DeclareMathOperator{\Ind}{Ind_G^H}

% Shrinks the margins
\addtolength{\textwidth}{20pt}
\addtolength{\evensidemargin}{-10pt}
\addtolength{\oddsidemargin}{-10pt}
\addtolength{\textheight}{15pt}

% Spacing between lines

% don't force bottoms to same spot
\raggedbottom

\DeclareFontFamily{U}{wncy}{}
\DeclareFontShape{U}{wncy}{m}{n}{<->wncyr10}{}
\DeclareSymbolFont{mcy}{U}{wncy}{m}{n}
\DeclareMathSymbol{\Sha}{\mathord}{mcy}{"58}
\DeclareMathSymbol{\sha}{\mathord}{mcy}{"78}

\begin{document}

\newtheorem{thm}{Theorem}[section]
\newtheorem{cor}[thm]{Corollary}
\newtheorem{lem}[thm]{Lemma}
\newtheorem{prop}[thm]{Proposition}
\newtheorem{defin}[thm]{Definition}
\newtheorem{exam}[thm]{Example}
\newtheorem{examples}[thm]{Examples}
\newtheorem{rem}[thm]{Remark}
\newtheorem{case}{\sl Case}
\newtheorem{claim}{Claim}
\newtheorem{prt}{Part}
\newtheorem*{mainthm}{Main Theorem}
\newtheorem*{thmA}{Theorem A}
\newtheorem*{thmB}{Theorem B}
\newtheorem*{thmC}{Theorem C}
\newtheorem*{thmD}{Theorem D}
\newtheorem{question}[thm]{Question}
\newtheorem*{notation}{Notation}
\swapnumbers
\newtheorem{rems}[thm]{Remarks}
\newtheorem*{acknowledgment}{Acknowledgment}

\newtheorem{questions}[thm]{Questions}
\numberwithin{equation}{section}

\newcommand{\Bock}{\mathrm{Bock}}
\newcommand{\dec}{\mathrm{dec}}
\newcommand{\diam}{\mathrm{diam}}
\newcommand{\dirlim}{\varinjlim}
\newcommand{\discup}{\ \ensuremath{\mathaccent\cdot\cup}}
\newcommand{\divis}{\mathrm{div}}
\newcommand{\gr}{\mathrm{gr}}
\newcommand{\nek}{,\ldots,}
\newcommand{\ind}{\hbox{ind}}
\newcommand{\inv}{^{-1}}
\newcommand{\isom}{\cong}
\newcommand{\Massey}{\mathrm{Massey}}
\newcommand{\ndiv}{\hbox{$\,\not|\,$}}
\newcommand{\nil}{\mathrm{nil}}
\newcommand{\pr}{\mathrm{pr}}
\newcommand{\tagg}{''}
\newcommand{\tensor}{\otimes}
\newcommand{\alp}{\alpha}
\newcommand{\gam}{\gamma}
\newcommand{\Gam}{\Gamma}
\newcommand{\del}{\delta}
\newcommand{\Del}{\Delta}
\newcommand{\eps}{\epsilon}
\newcommand{\lam}{\lambda}
\newcommand{\Lam}{\Lambda}
\newcommand{\sig}{\sigma}
\newcommand{\Sig}{\Sigma}
\newcommand{\bfA}{\mathbf{A}}
\newcommand{\bfB}{\mathbf{B}}
\newcommand{\bfC}{\mathbf{C}}
\newcommand{\bfF}{\mathbf{F}}
\newcommand{\bfP}{\mathbf{P}}
\newcommand{\bfQ}{\mathbf{Q}}
\newcommand{\bfR}{\mathbf{R}}
\newcommand{\bfS}{\mathbf{S}}
\newcommand{\bfT}{\mathbf{T}}
\newcommand{\bfZ}{\mathbf{Z}}
\newcommand{\dbA}{\mathbb{A}}
\newcommand{\dbC}{\mathbb{C}}
\newcommand{\dbF}{\mathbb{F}}
\newcommand{\dbN}{\mathbb{N}}
\newcommand{\dbQ}{\mathbb{Q}}
\newcommand{\dbR}{\mathbb{R}}
\newcommand{\dbU}{\mathbb{U}}
\newcommand{\dbZ}{\mathbb{Z}}
\newcommand{\grf}{\mathfrak{f}}
\newcommand{\gra}{\mathfrak{a}}
\newcommand{\grA}{\mathfrak{A}}
\newcommand{\grB}{\mathfrak{B}}
\newcommand{\grd}{\mathfrak{d}}
\newcommand{\grh}{\mathfrak{h}}
\newcommand{\grI}{\mathfrak{I}}
\newcommand{\grL}{\mathfrak{L}}
\newcommand{\grm}{\mathfrak{m}}
\newcommand{\grp}{\mathfrak{p}}
\newcommand{\grq}{\mathfrak{q}}
\newcommand{\grR}{\mathfrak{R}}
\newcommand{\calA}{\mathcal{A}}
\newcommand{\calB}{\mathcal{B}}
\newcommand{\calC}{\mathcal{C}}
\newcommand{\calE}{\mathcal{E}}
\newcommand{\calG}{\mathcal{G}}
\newcommand{\calH}{\mathcal{H}}
\newcommand{\calK}{\mathcal{K}}
\newcommand{\calL}{\mathcal{L}}
\newcommand{\calM}{\mathcal{M}}
\newcommand{\calW}{\mathcal{W}}
\newcommand{\calV}{\mathcal{V}}

\makeatletter
\renewcommand{\BibLabel}{%
    %\hfill
    \Hy@raisedlink{\hyper@anchorstart{cite.\CurrentBib}\hyper@anchorend}%
    [\thebib]%
}

%Table of Contents
\setcounter{tocdepth}{3}

% Add bold to \section titles in ToC and remove . after numbers
\renewcommand{\tocsection}[3]{%
  \indentlabel{\@ifnotempty{#2}{\bfseries\ignorespaces#1 #2\quad}}\bfseries#3}
% Remove . after numbers in \subsection
\renewcommand{\tocsubsection}[3]{%
  \indentlabel{\@ifnotempty{#2}{\ignorespaces#1 #2\quad}}#3}
%\let\tocsubsubsection\tocsubsection% Update for \subsubsection
%...

\newcommand\@dotsep{4.5}
\def\@tocline#1#2#3#4#5#6#7{\relax
  \ifnum #1>\c@tocdepth % then omit
  \else
    \par \addpenalty\@secpenalty\addvspace{#2}%
    \begingroup \hyphenpenalty\@M
    \@ifempty{#4}{%
      \@tempdima\csname r@tocindent\number#1\endcsname\relax
    }{%
      \@tempdima#4\relax
    }%
    \parindent\z@ \leftskip#3\relax \advance\leftskip\@tempdima\relax
    \rightskip\@pnumwidth plus1em \parfillskip-\@pnumwidth
    #5\leavevmode\hskip-\@tempdima{#6}\nobreak
    \leaders\hbox{$\m@th\mkern \@dotsep mu\hbox{.}\mkern \@dotsep mu$}\hfill
    \nobreak
    \hbox to\@pnumwidth{\@tocpagenum{\ifnum#1=1\bfseries\fi#7}}\par% <-- \bfseries for \section page
    \nobreak
    \endgroup
  \fi}

\def\l@subsection{\@tocline{2}{0pt}{2.5pc}{5pc}{}}

\makeatother

\title[Augmented cup products]{AUGMENTED CUP PRODUCTS}

\author{Dor Amzaleg}
%\date {November 2019}
%\address{} 
%\email{amzalegd@post.bgu.ac.il}

%\thanks{This work was supported by ???}

%\keywords{???, ???}

%\subjclass[2010]{Primary 12G05, Secondary 20J06,  68R15}

\maketitle

\begin{abstract}
In this paper, we present Tate's theory of \textbf{augmented cup products} in profinite cohomology in a modern constructive style. As an application, we interpret pairings between groups associated to curves constructed by Lichtenbaum, in terms of augmented cup products. 
\end{abstract}

\section{Introduction}
\label{Introduction}
The notion of augmented cup products was introduced by Tate in the early days of Galois cohomology (in the 1950's). These are $G$-bilinear maps between cohomology groups. The augmented cup products have similar properties to those of the standard cup product, but they are adjusted and especially useful in situations where one has reciprocity laws, as will be described below. 

Recall that given a profinite group $G$, non-negative integers $r,s$, and a $G$-bilinear pairing of discrete $G$-modules $A\times B \to C$, the \textbf{cup product} is a bilinear map 
\[
\cup : \Ho^r(G,A)\times \Ho^s(G,B)\to \Ho^{r+s}(G,C).
\]
The \textbf{augmented cup products} are bilinear maps 
\[
\acup : \Ho^r(G,A'')\times \Ho^s(G,B'')\to \Ho^{r+s+1}(G,C),
\]
where $A'',B'',C$ come from a Tate product $(A,B,C)$. Here a \textbf{Tate product} $(A,B,C)$ is a data consisting of two short exact sequences of discrete $G$-modules 
\[
    0 \xrightarrow{} A'\xrightarrow{} A\xrightarrow{} A''\xrightarrow{} 0, \]\[
    0 \xrightarrow{} B'\xrightarrow{} B\xrightarrow{} B''\xrightarrow{} 0, 
\]
and two $G$-bilinear maps
\[
    A'\times B\to C,   \quad\quad    A\times B'\to C,
\]
coinciding on $A'\times B'$. 

The reciprocity mentioned above is expressed in the following example of a Tate product. Suppose that we have an embedding of discrete $G$-modules $A'\xhookrightarrow{i} A$, and a $G$-bilinear map of discrete $G$-modules 
\[
\circ: A'\times A\to C.
\]
Suppose that for every $f,g \in A'$ there is a reciprocity law
\[
f\circ i(g) = g \circ i(f).
\]
Then $(A,A,C)$ forms a Tate product, which gives rise for every $r,s$ to an augmented cup product
\[
\acup : \Ho^r(G,A/A')\times \Ho^s(G,A/A')\to \Ho^{r+s+1}(G,C).
\]

An example of such reciprocity is Weil reciprocity law \cite{Lang83}*{Page 172}: Consider a curve over a separably closed field, and elements $f,g$ in its function field. Let $\fdiv$ be the divisor map mapping each function to its divisor. Suppose that the associated principal divisors $\fdiv(f)$ and $\fdiv(g)$ have disjoint supports. Weil reciprocity law states that
\[
f(\fdiv(g)) = g(\fdiv(f)).
\]
This will be described in detail in Chapter \ref{Applications}, and will induce the Tate product and the augmented cup product for our geometrical applications. 

In general, in various situations with a similar reciprocity (see for example \cite{Lang}*{Chapter X, Section 3}), the theory of augmented cup products can be applied to construct pairings between cohomology groups and to study their properties.

The notion of augmented cup products was presented in a 1966 book by Lang \cite{Lang}. On the other hand, other canonical textbooks of Galois cohomology such as \cite{Serre}, \cite{Koch02}, \cite{NSW}, do not mention the augmented cup products at all. In \cite{Milne} there is a short description of the augmented cup product, but without comprehensive explanations.

In the first part of this paper we give a constructive description of the augmented cup products and develop their general theory in detail. We prove their natural properties such as functoriality, (graded) commutativity, compatibility with group action, and compatibility with other morphisms. 

In his book, Lang presents the augmented cup products using a  non-constructive approach. He defines the notion for Tate products in an abelian category equipped with an abstract $\delta$-functor to another abelian category. This machinery of $\delta$-functors and dimension shifting is the core of the proofs of the various properties of the augmented cup products in Lang's book. These proofs are based on the \textit{abstract uniqueness theorem} \cite{Lang}*{Chapter 1, Section 1}, which enables to deduce isomorphisms between cohomology groups in higher dimensions from those in dimension zero, without explicit calculations. By contrast, in this paper we use a more explicit approach. We prove the various properties of the augmented cup products using detailed calculations. For example, the compatibility of the augmented cup product with the restriction and the corestriction maps, which is proved in \cite{Lang} using the abstract uniqueness theorem, is proved here very explicitly.

The second part of this paper deals with constructions made by Lichtenbaum in his important 1969 paper \cite{Lich}. In this paper, Lichtenbaum constructs three canonical pairings between groups associated to curves over fields, and proves that over $p$-adic fields these pairings are perfect. He also mentions, in a few words and without any explanations, that one of those pairings is a special case of the augmented cup product. Our main goal in this part of the paper is to explain this statement. We apply the theory developed in the first part to interpret Lichtenbaum's pairings in the general context of augmented cup products in Galois cohomology.

More specifically, consider a proper, smooth, geometrically connected curve $X$ over a field $k$ with separable closure $\bar{k}$. Consider the extension by scalars $\bar{X}=X\otimes_{k} \bar{k}$ of $X$ to $\bar{k}$. Let $G=\Gal(\bar{k}/k)$ be the absolute Galois group of $k$. Let $\Pic(X)$ be the Picard group of $X$, $\Pic_0(X)$ the subgroup of $\Pic(X)$ consisting of the divisor classes of degree zero, and $\Br(X)$ the Brauer group of $X$. Lichtenbaum constructs pairings:
\begin{enumerate}
    \item $\Ho^0(G,\Pic_0(\bar{X}))\times \Ho^1(G,\Pic_0(\bar{X}))\to\Br(k)$,
    \item $\Pic_0(X)\times \Ho^1(G,\Pic(X))\to\Br(k)$,
    \item $\Pic(X)\times \Br(X)\to\Br(k)$,
\end{enumerate}
and proves their compatibility, using the geometrical properties of the elements involved and computations in the level of cochains.

We show that the pairing (1) is indeed an augmented cup product. Further, we give different proofs, based on the general theory of the augmented cup products, to the existence of the pairing (2), and to the compatibility of the pairing (3) with the former two pairings. We also generalize some of Lichtenbaum's results by using general cohomological arguments proved in the first part.

This paper was submitted as a Master thesis at Ben-Gurion University, under the supervision of Prof. Ido Efrat.

\medskip
\medskip

\section{Cohomological preliminaries}
\label{section about Cohomological preliminaries}

\subsection{Basic profinite cohomology}
\quad\\
We recall some basic cohomological definitions and notations. Standard references for profinite cohomology are \cite{Koch02}, \cite{Milne}, \cite{NSW}, \cite{Serre}. Let $G$ be a profinite group. Let $A$ be a discrete $G$-module, and let $r$ be a non-negative integer. 

The (non-homogeneous) $r$\textbf{-cochains} are the continuous maps $G^r\to A$. The group of all $r$-cochains is denoted by $\C^r(G,A)$.

The \textbf{coboundary maps} $d_r:\C^r(G,A)\to \C^{r+1}(G,A)$ are defined by: 
\[
\begin{split}
    d_rf(x_1\nek x_{r+1})&=x_1\cdot f(x_2\nek x_{r+1})+\\
    &\sum_{i=1}^r (-1)^i\cdot f(x_1\nek x_i\cdot x_{i+1}\nek x_{r+1})+\\
    &(-1)^{r+1}\cdot f(x_1\nek x_r).
\end{split}
\]

One has $d_{r+1}\circ d_{r}=0$, i.e, we have a complex:
\[
\C^0(G,A)\xrightarrow{d_0}\C^1(G,A)\xrightarrow{d_1}\C^2(G,A)\xrightarrow{d_2}\cdots
\]

The $r$\textbf{-cocycles} are the elements of 
\[
\Z^r(G,A)=\Ker d_r.
\]

The $r$\textbf{-coboundaries} are the elements of \[
\B^r(G,A)=\Img d_{r-1}.
\]

The $r^{th}$ \textbf{cohomology group} of $G$ (with values in $A$) is 
\[
\Ho^r(G,A)=\Z^r(G,A)/\B^r(G,A).
\]

We denote the \textbf{cohomology class} of a cocycle $f$ by $[f]$.

\medskip
\medskip

\subsection{Induced morphisms of cochains and cohomology groups}
\label{Induced morphisms of cochains and cohomology groups}
\quad \\
Let $G_1$ and $G_2$ be profinite groups. Let $A_1$ be a discrete $G_1$-module, and let $A_2$ be a discrete $G_2$-module. Suppose that we have a morphism of profinite groups $\phi:G_2\to G_1$, and a morphism of abelian groups $\psi:A_1\to A_2$ such that $(\phi,\psi)$ is a morphism of discrete modules (i.e. $\psi(\phi(g_2)\cdot a_1)=g_2\cdot \psi(a_1)$, for $a_1\in A_1, g_2\in G_2$). Then for every $r$ we get an induced morphism of cochains \cite{Koch02}*{\S3.1}:
\[
\begin{split}
    (\phi,\psi)^*: \C^r(G_1,A_1)&\to \C^r(G_2,A_2)\\
    f &\mapsto \psi\circ f\circ\phi.
\end{split}
\]
Here, $\psi\circ f\circ\phi$ denotes the map given by
\[
(\sigma_1,\ldots, \sigma_r)\mapsto \psi f(\phi(\sigma_1),\ldots,\phi(\sigma_r)).
\] 

The induced morphism $(\phi,\psi)^*$ is compatible with the coboundary maps, i.e. the following diagram commutes:
\begin{equation}
\label{compatibility of induced morphism with the coboundary maps}
    \begin{tikzcd}
    \C^r(G_1,A_1)\arrow[r,"{(\phi,\psi)^*}"]\arrow[d,"d_r"]&
    \C^r(G_2,A_2)\arrow[d,"d_r"] \\
    \C^{r+1}(G_1,A_1)\arrow[r,"{(\phi,\psi)^*}"] &
    \C^{r+1}(G_2,A_2).
    \end{tikzcd}
\end{equation}
Hence, it induces a morphism of cohomology groups 
\[
\begin{split}
    (\phi,\psi)^*: \Ho^r(G_1,A_1)&\to \Ho^r(G_2,A_2)\\
    [f] &\mapsto [\psi\circ f\circ\phi].
\end{split}
\]

In the special case $G_1=G_2=G, \phi=\id_{G}$, we abbreviate the induced morphisms by
\[
\psi_*: \C^{r}(G,A_1)\to \C^{r}(G,A_2), \qquad \psi_*: \Ho^{r}(G,A_1)\to \Ho^{r}(G,A_2).
\]

For the rest of this paper, let $G, G_1, G_2$ be profinite groups.

For the following proposition we consider a short exact sequence of discrete $G$-modules:
\[
    0 \xrightarrow{} A'\xrightarrow{i} A\xrightarrow{j} A''\xrightarrow{} 0 ,
\]
and the short exact sequence of cochains that it induces:
\[
    0 \xrightarrow{} {\rm C}^{r}(G,A')\xrightarrow{i_*} {\rm C}^{r}(G,A)\xrightarrow{j_*} {\rm C}^{r}(G,A'')\xrightarrow{} 0 .
\]
This short exact sequence implies that ${\rm C}^{r}(G,A')$ may be identified with the kernel of $j_*$.

\begin{prop}
 \label{(a) df is in A' (b)f is in A' up to coboundary}
For the setting as above:\\
(a) If $f\in {\rm C}^r(G,A)$ and $j_*f\in {\rm Z}^r(G,A'')$, then $d_rf\in {\rm C}^{r+1}(G,A')$.\\
(b) If $f\in {\rm C}^r(G,A)$ and $j_*f\in {\rm B}^r(G,A'')$, then $f\in {\rm C}^r(G,A')$ up to a coboundary, i.e. there exists:
\[
 \tilde{f}\in {\rm C}^{r-1}(G,A) \hbox{ such that } f-d_{r-1}\tilde{f}\in {\rm C}^r(G,A').
\]
\end{prop}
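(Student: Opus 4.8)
The plan is to treat both parts as short diagram chases in the induced short exact sequence of cochains (which holds in every degree), relying on two facts already recorded above: that $j_*$ commutes with the coboundary maps, which is diagram \eqref{compatibility of induced morphism with the coboundary maps} applied to the morphism $(\id_G, j)$ of discrete modules, and that in each degree $\C^r(G,A')$ is identified with the kernel of $j_*$, while $j_*$ is surjective.

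For part (a), I would start from the commutativity $j_* \circ d_r = d_r \circ j_*$. Applying it to $f$ gives $j_*(d_r f) = d_r(j_* f)$. The hypothesis $j_* f \in \Z^r(G,A'')$ means precisely that $d_r(j_* f) = 0$, whence $j_*(d_r f) = 0$. Therefore $d_r f$ lies in the kernel of $j_*$ in degree $r+1$, which by the identification above is $\C^{r+1}(G,A')$, as claimed.

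For part (b), the hypothesis $j_* f \in \B^r(G, A'')$ furnishes some $g \in \C^{r-1}(G, A'')$ with $j_* f = d_{r-1} g$. Here I would invoke the surjectivity of $j_*$ in degree $r-1$ to choose a lift $\tilde{f} \in \C^{r-1}(G, A)$ with $j_* \tilde{f} = g$. Using commutativity once more, $j_*(f - d_{r-1} \tilde{f}) = j_* f - d_{r-1}(j_* \tilde{f}) = j_* f - d_{r-1} g = 0$, so $f - d_{r-1}\tilde{f}$ lies in the kernel of $j_*$, that is, in $\C^r(G, A')$, which is exactly the required statement.

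Neither step is genuinely difficult; the only point that requires attention is the surjectivity of $j_*$ on cochains used in part (b), which rests on lifting a continuous map $G^{r-1} \to A''$ along the surjection $j \colon A \to A''$ to a continuous map $G^{r-1} \to A$. This is precisely the right-exactness already recorded in the induced short exact sequence of cochains above, so rather than reprove the lifting I would simply cite that sequence. In short, the whole proposition is the degree-by-degree shadow of the snake-lemma mechanism underlying the connecting homomorphism, and I expect the write-up to amount to little more than making the two diagram chases explicit.
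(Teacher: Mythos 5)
Your proposal is correct and follows essentially the same argument as the paper: part (a) is the identical diagram chase using $j_*d_r = d_r j_*$ and the identification of $\C^{r+1}(G,A')$ with $\Ker j_*$, and part (b) lifts the cochain $g$ (the paper calls it $\tilde f''$) along the surjection $j_*$ and checks $j_*(f - d_{r-1}\tilde f)=0$ exactly as you do. The paper likewise takes the surjectivity of $j_*$ on cochains as given by the stated short exact sequence of cochain groups, so no difference there either.
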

\begin{proof}
We have the following commutative diagram with exact rows:
\begin{center}
        \begin{tikzcd}
        0\arrow[r] & {\rm C}^{r-1}(G,A')\arrow[r,"i_*"]\arrow[d,"d_{r-1}"] & {\rm C}^{r-1}(G,A)\arrow[r,"j_*"]\arrow[d,"d_{r-1}"] & {\rm C}^{r-1}(G,A'')\arrow[r]\arrow[d,"d_{r-1}"] & 0\\
        0\arrow[r] & {\rm C}^{r}(G,A')\arrow[r,"i_*"]\arrow[d,"d_r"] & {\rm C}^{r}(G,A)\arrow[r,"j_*"]\arrow[d,"d_r"] & {\rm C}^{r}(G,A'')\arrow[r]\arrow[d,"d_r"] & 0\\
        0\arrow[r] & {\rm C}^{r+1}(G,A')\arrow[r,"i_*"] & {\rm C}^{r+1}(G,A)\arrow[r,"j_*"] & {\rm C}^{r+1}(G,A'')\arrow[r] & 0.
        \end{tikzcd}
\end{center}
(a) By the commutativity of the diagram, $j_*(d_rf)=d_r(j_*f)=0$. By the exactness of the lower row, $d_rf\in {\rm C}^{r+1}(G,A')$.\\
(b) Take $\tilde{f}''\in {\rm C}^{r-1}(G,A'')$ such that $d_{r-1}\tilde{f}''=j_*f$. Since $j_*$ is surjective, there exists $\tilde{f}\in {\rm C}^{r-1}(G,A)$ such that $j_*\tilde{f}=\tilde{f}''$. Therefore, $d_{r-1}(j_*\tilde{f})=j_*f$. By the commutativity of the diagram, $j_*(d_{r-1}\tilde{f})=j_*f$. Hence $j_*(f-d_{r-1}\tilde{f})=0$, and by the exactness of the middle row $f-d_{r-1}\tilde{f}\in {\rm C}^r(G,A')$. \qedhere
\end{proof}

\medskip
\medskip

\subsection{Cup products}
\quad\\
Suppose that we have a $G$-bilinear map of discrete $G$-modules 
\[
\begin{split}
    A\times B &\to C \\
    (a,b)&\mapsto a \times b.
\end{split}
\]
Namely, the map is bilinear and for $g\in G$ one has
\[
g\cdot (a\times b) = g\cdot a \times g\cdot b.
\]
Let $r,s$ be non-negative integers. The \textbf{cup product of cochains}
\[
\cup : \C^r(G,A)\times {\rm C}^s(G,B)\to {\rm C}^{r+s}(G,C) ,
\]
is defined by: 
\begin{equation}
    \label{Cup product between two cochains groups}
    (f\cup g)(x_1\nek x_{r+s}):= f(x_1\nek x_r) \times x_1\cdots x_r\cdot g(x_{r+1}\nek x_{r+s}),
\end{equation}
for every $f\in {\rm C}^r(G,A)$, $g\in {\rm C}^s(G,B)$.

The \textbf{"Leibnitz rule"} in cohomology \cite{NSW}*{Proposition\ 1.4.1} is
\begin{equation}
\label{Leibnitz of cup products}
d_{r+s}(f\cup g)=d_rf\cup g + (-1)^rf\cup d_sg.
\end{equation}

If $f$ and $g$ are cocycles then $f\cup g$ is a cocycle as well, since
\begin{equation}
\label{Cup product between two cocycles}
d_{r+s}(f\cup g)=0\cup g + (-1)^rf\cup 0=0+0=0 .
\end{equation}

If $f$ is a coboundary and $g$ is a cocycle, or conversely, then $f\cup g$ is a coboundary. To see this, take $\tilde{f}\in {\rm C}^{r-1}(G,A)$ such that $d_{r-1}\tilde{f}=f$, and $g\in\Z^s(G,B)$. We have,
\begin{equation}
    \label{Cup product between cocycle and coboundary}
    d_{r+s-1}(\tilde{f}\cup g)=d_{r-1}\tilde{f}\cup g + (-1)^{r-1}\tilde{f}\cup d_sg=f\cup g + (-1)^{r-1}\tilde{f}\cup 0=f\cup g ,
\end{equation}
as required. The case $f\in {\rm Z}^r(G,A), g\in {\rm B}^s(G,B)$, is proved in a similar way.

The cup product of cochains induces the \textbf{cup product of cohomology classes}
\[
\cup : {\rm H}^r(G,A)\times {\rm H}^s(G,B)\to {\rm H}^{r+s}(G,C),
\]
defined as follows:\\
Given $\alpha\in {\rm H}^r(G,A), \beta\in {\rm H}^s(G,B)$, we choose representatives $f\in {\rm Z}^r(G,A), g\in {\rm Z}^s(G,B)$ such that $\alpha=[f], \beta=[g]$, and set:
\begin{equation}
\label{Cup product between cohomology classes}
\alpha\cup\beta:= [f\cup g].
\end{equation}
It is well defined since (\ref{Cup product between two cocycles}) implies that $f\cup g$ is indeed a cocycle, and (\ref{Cup product between cocycle and coboundary}) implies that the definition is independent of the choice of the representatives.

The cup product of cohomology classes is bilinear, functorial, associative, and graded commutative, in the sense that
\[
\alpha\cup\beta = (-1)^{rs}\cdot \beta\cup\alpha
\]
\cite{Koch02}*{Theorems\ 3.25,\ 3.26,\ 3.27}.

The cup product is functorial in the following sense: Suppose that we have a $G_1$-bilinear map of discrete $G_1$-modules $A_1\times B_1\to C_1$, and a $G_2$-bilinear map of discrete $G_2$-modules $A_2\times B_2\to C_2$. Suppose that $\phi: G_2\to G_1$ is a morphism of profinite groups, and that we have morphisms
\[
\psi_A: A_1\to A_2,\quad \psi_B: B_1\to B_2, \quad \psi_C: C_1\to C_2,
\] 
such that $(\phi,\psi_A), (\phi,\psi_B), (\phi,\psi_C)
$ 
are morphisms of discrete modules. Suppose that the above morphisms of discrete modules preserve bilinear maps. Namely, that the diagram
\begin{equation}
    \label{(phi,psi) preserves bilinear maps}
    \begin{tikzcd}
    A_1  \qquad
    \times \qquad 
    B_1 \quad \arrow[d, "\psi_A", xshift=-8ex] \arrow[d, "\psi_B", xshift=6ex] \arrow[r] &
    \quad C_1 \arrow[d, "\psi_C"] \\
    A_2 \qquad \times \qquad B_2 \quad \arrow[r] & \quad C_2 
    \end{tikzcd}
\end{equation}
is commutative.

\begin{prop}
\label{functoriality of the cup product}
 In the above setting:\\
(a)\quad The diagram
\[
    \begin{tikzcd}
    \C^{r}(G_1,A_1) \arrow[d, "{(\phi,\psi_A)^*}"] &
    \times &
    \C^{s}(G_1,B_1) \quad \arrow[d, "{(\phi,\psi_B)^*}"] \arrow[r, "\cup"] &
    \quad \C^{r+s}(G_1,C_1) \arrow[d, "{(\phi,\psi_C)^*}"] \\
    \C^{r}(G_2,A_2) & \times & \C^{s}(G_2,B_2) \quad \arrow[r, "\cup"] & \quad \C^{r+s}(G_2,C_2) 
    \end{tikzcd}
\]
is commutative. \\
(b)\quad The diagram
\[
\begin{tikzcd}
    \Ho^{r}(G_1,A_1) \arrow[d, "{(\phi,\psi_A)^*}"] &
    \times &
    \Ho^{s}(G_1,B_1) \quad \arrow[d, "{(\phi,\psi_B)^*}"] \arrow[r, "\cup"] &
    \quad \Ho^{r+s}(G_1,C_1) \arrow[d, "{(\phi,\psi_C)^*}"] \\
    \Ho^{r}(G_2,A_2) & \times & \Ho^{s}(G_2,B_2) \quad \arrow[r, "\cup"] & \quad \Ho^{r+s}(G_2,C_2) 
\end{tikzcd}
\]
is commutative.
\end{prop}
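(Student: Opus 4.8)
The plan is to prove (a) by a direct computation on cochains, evaluating the two composites of the square on an arbitrary argument, and then to deduce (b) formally by passing to cohomology classes.

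For (a), I would fix $f\in\C^r(G_1,A_1)$ and $g\in\C^s(G_1,B_1)$ and evaluate the two cochains $(\phi,\psi_C)^*(f\cup g)$ and $(\phi,\psi_A)^*f\cup(\phi,\psi_B)^*g$ on an arbitrary tuple $(\sigma_1,\dots,\sigma_{r+s})\in G_2^{r+s}$. Unwinding the definition of the induced morphism and of the cup product of cochains (\ref{Cup product between two cochains groups}), the first composite becomes
\[
\psi_C\!\big(f(\phi\sigma_1,\dots,\phi\sigma_r)\times \phi(\sigma_1)\cdots\phi(\sigma_r)\cdot g(\phi\sigma_{r+1},\dots,\phi\sigma_{r+s})\big),
\]
while the second becomes
\[
\psi_A\!\big(f(\phi\sigma_1,\dots,\phi\sigma_r)\big)\times \sigma_1\cdots\sigma_r\cdot\psi_B\!\big(g(\phi\sigma_{r+1},\dots,\phi\sigma_{r+s})\big).
\]
Writing $a_1=f(\phi\sigma_1,\dots,\phi\sigma_r)$, $b_1=g(\phi\sigma_{r+1},\dots,\phi\sigma_{r+s})$ and $\tau=\sigma_1\cdots\sigma_r$, and using that $\phi$ is a group homomorphism so that $\phi(\sigma_1)\cdots\phi(\sigma_r)=\phi(\tau)$, the claim reduces to the single identity
\[
\psi_C\big(a_1\times\phi(\tau)\cdot b_1\big)=\psi_A(a_1)\times\tau\cdot\psi_B(b_1).
\]

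This identity is where the hypotheses enter, and it is the only real content of the argument. I would first apply the commutativity of diagram (\ref{(phi,psi) preserves bilinear maps}), i.e. the compatibility of $\psi_A,\psi_B,\psi_C$ with the bilinear pairings, to rewrite the left-hand side as $\psi_A(a_1)\times\psi_B(\phi(\tau)\cdot b_1)$; then I would invoke the assumption that $(\phi,\psi_B)$ is a morphism of discrete modules, giving $\psi_B(\phi(\tau)\cdot b_1)=\tau\cdot\psi_B(b_1)$, which yields the right-hand side exactly. The point to watch is the bookkeeping of the two different group actions: on the source side the element $\tau\in G_2$ acts through $\phi(\tau)\in G_1$, and it is precisely the module-morphism compatibility $\psi_B(\phi(g_2)\cdot b_1)=g_2\cdot\psi_B(b_1)$ that bridges them. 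This is the only step that is not purely formal, and I expect it to be the main (mild) obstacle.

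Part (b) then follows formally. Given $\alpha=[f]\in\Ho^r(G_1,A_1)$ and $\beta=[g]\in\Ho^s(G_1,B_1)$ with $f,g$ cocycles, the definitions of the cup product on cohomology (\ref{Cup product between cohomology classes}) and of the induced map on cohomology give
\[
(\phi,\psi_C)^*(\alpha\cup\beta)=(\phi,\psi_C)^*[f\cup g]=\big[(\phi,\psi_C)^*(f\cup g)\big].
\]
Applying part (a) to the representative and then reading the definitions in the other direction, this equals $\big[(\phi,\psi_A)^*f\big]\cup\big[(\phi,\psi_B)^*g\big]=(\phi,\psi_A)^*\alpha\cup(\phi,\psi_B)^*\beta$. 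The only thing to record is that everything is well defined on classes: the compatibility diagram (\ref{compatibility of induced morphism with the coboundary maps}) guarantees that $(\phi,\psi_A)^*$, $(\phi,\psi_B)^*$, $(\phi,\psi_C)^*$ send cocycles to cocycles, and (\ref{Cup product between two cocycles})--(\ref{Cup product between cocycle and coboundary}) guarantee that the cup product descends, so the choice of representatives is immaterial and the square commutes.
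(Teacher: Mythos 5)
Your proof is correct, but it is not the route the paper takes: for this proposition the paper gives no computation at all and simply cites \cite{NSW}*{Proposition 1.4.2}. Your argument fills in exactly what that citation hides. The reduction of part (a) to the single identity
\[
\psi_C\bigl(a_1\times\phi(\tau)\cdot b_1\bigr)=\psi_A(a_1)\times\tau\cdot\psi_B(b_1),
\]
proved by first applying the commutativity of diagram (\ref{(phi,psi) preserves bilinear maps}) and then the discrete-module compatibility $\psi_B(\phi(\tau)\cdot b_1)=\tau\cdot\psi_B(b_1)$, is sound, and the passage to cohomology in (b) is the standard formal step, with well-definedness covered by (\ref{compatibility of induced morphism with the coboundary maps}) and (\ref{Cup product between two cocycles})--(\ref{Cup product between cocycle and coboundary}). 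It is worth noting that your computation is essentially the same as the one the paper \emph{does} write out for the twisted variant (Proposition \ref{twisted functoriality of the cup product}), where the roles of $(\chi,\xi)$ replace $(\phi,\psi_B)$ on the $B$-side; so your proof is fully consistent in style and technique with the paper's explicit, constructive program, and what it buys is self-containedness, whereas the paper's citation buys brevity at the cost of deferring the only nontrivial step (the interplay of the two group actions through $\phi$) to the literature.
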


\begin{proof}
See \cite{NSW}*{Proposition 1.4.2}.
\end{proof}

Suppose that in addition to the setting above we have morphisms \[
\chi: G_1\to G_2, \quad \xi: B_2\to B_1,
\]
such that $(\chi,\xi)$ is a morphism of discrete modules. Suppose that $\chi\circ\phi=\id_{G_2}$, and that the diagram
\begin{equation}
    \label{twisted preserving of bilinear maps}
    \begin{tikzcd}
    A_1  \qquad
    \times \qquad 
    B_1 \quad \arrow[d, "\psi_A", xshift=-8ex] \arrow[r] &
    \quad C_1 \arrow[d, "\psi_C"] \\
    A_2 \qquad \times \qquad B_2 \quad  \arrow[u, "\xi", xshift=6ex] \arrow[r] & \quad C_2 
    \end{tikzcd}
    \end{equation}
is commutative.
\begin{prop}
\label{twisted functoriality of the cup product}
In the above setting:\\
(a)\quad The following diagram commutes:
\[
\begin{tikzcd}
    \C^{r}(G_1,A_1) \arrow[d, "{(\phi,\psi_A)^*}"] &
    \times &
    \C^{s}(G_1,B_1) \quad  \arrow[r, "\cup"] &
    \quad \C^{r+s}(G_1,C_1) \arrow[d, "{(\phi,\psi_C)^*}"] \\
    \C^{r}(G_2,A_2) & \times & \C^{s}(G_2,B_2) \arrow[u, "{(\chi,\xi)^*}"] \quad \arrow[r, "\cup"] & \quad \C^{r+s}(G_2,C_2) .
\end{tikzcd}
\]
(b)\quad The following diagram commutes:
\[
\begin{tikzcd}
    \Ho^{r}(G_1,A_1) \arrow[d, "{(\phi,\psi_A)^*}"] &
    \times &
    \Ho^{s}(G_1,B_1) \quad  \arrow[r, "\cup"] &
    \quad \Ho^{r+s}(G_1,C_1) \arrow[d, "{(\phi,\psi_C)^*}"] \\
    \Ho^{r}(G_2,A_2) & \times & \Ho^{s}(G_2,B_2) \arrow[u, "{(\chi,\xi)^*}"] \quad \arrow[r, "\cup"] & \quad \Ho^{r+s}(G_2,C_2) .
\end{tikzcd}
\]
\end{prop}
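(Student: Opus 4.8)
The plan is to prove part (a) by a direct computation on cochains, evaluating both composites at an arbitrary tuple $(\sigma_1\nek\sigma_{r+s})\in G_2^{r+s}$, and then to deduce part (b) by passing to cohomology classes. There is no need for dimension shifting or any abstract input: everything reduces to unwinding the two definitions (the induced morphism $(\phi,\psi)^*f=\psi\circ f\circ\phi$ and the cup product of cochains (\ref{Cup product between two cochains groups})) and then applying the four standing hypotheses in the right order.

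First I would fix $f\in\C^r(G_1,A_1)$ and $g\in\C^s(G_2,B_2)$ and unwind the left-hand composite $(\phi,\psi_C)^*\bigl(f\cup(\chi,\xi)^*g\bigr)$. Applying in turn the definition of $(\phi,\psi_C)^*$, the cup-product formula (\ref{Cup product between two cochains groups}), and the definition of $(\chi,\xi)^*$, its value at $(\sigma_1\nek\sigma_{r+s})$ becomes
\[
\psi_C\Bigl(f(\phi(\sigma_1)\nek\phi(\sigma_r))\times\phi(\sigma_1)\cdots\phi(\sigma_r)\cdot\xi\bigl(g(\chi(\phi(\sigma_{r+1}))\nek\chi(\phi(\sigma_{r+s})))\bigr)\Bigr).
\]
I would then simplify this using the hypotheses in order: the relation $\chi\circ\phi=\id_{G_2}$ removes $\chi\circ\phi$ from the arguments of $g$; the fact that $\phi$ is a group homomorphism gives $\phi(\sigma_1)\cdots\phi(\sigma_r)=\phi(\sigma_1\cdots\sigma_r)$; the compatibility of $(\chi,\xi)$ with the module actions together again with $\chi\circ\phi=\id_{G_2}$ lets me pull the group element inside $\xi$, rewriting $\phi(\sigma_1\cdots\sigma_r)\cdot\xi(b)$ as $\xi\bigl((\sigma_1\cdots\sigma_r)\cdot b\bigr)$; and finally the commutativity of the twisted diagram (\ref{twisted preserving of bilinear maps}) converts $\psi_C(a\times\xi(b))$ into $\psi_A(a)\times b$.

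After these reductions the left-hand value equals
\[
\psi_A\bigl(f(\phi(\sigma_1)\nek\phi(\sigma_r))\bigr)\times(\sigma_1\cdots\sigma_r)\cdot g(\sigma_{r+1}\nek\sigma_{r+s}),
\]
and I would recognise this, by expanding the definitions of $(\phi,\psi_A)^*$ and of the cup product once more, as exactly $\bigl((\phi,\psi_A)^*f\cup g\bigr)(\sigma_1\nek\sigma_{r+s})$; this proves (a). For (b) I would invoke the facts recalled before the proposition, namely that both the induced morphisms $(\phi,\psi)^*$ and the cup product descend to cohomology: choosing cocycle representatives $f,g$ of the two given classes, part (a) yields the equality of the resulting cochains, whence the equality of their cohomology classes, which is precisely the commutativity of the second diagram.

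The main obstacle is purely bookkeeping. The only genuine point to watch is the third step, where one must transport the group element $\sigma_1\cdots\sigma_r$ through $\xi$ correctly, using the compatibility of $(\chi,\xi)$ with the actions in tandem with $\chi\circ\phi=\id_{G_2}$, and then apply the twisted bilinear diagram (\ref{twisted preserving of bilinear maps}) to the right pair of elements. Beyond carefully tracking which group acts on which module, there is no conceptual difficulty.
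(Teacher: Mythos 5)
Your proposal is correct and follows essentially the same route as the paper's proof: evaluate both composites on a tuple in $G_2^{r+s}$, use the module-compatibility of $(\chi,\xi)$ together with $\chi\circ\phi=\id_{G_2}$ to move the group element inside $\xi$, then apply the twisted diagram (\ref{twisted preserving of bilinear maps}) to convert $\psi_C(a\times\xi(b))$ into $\psi_A(a)\times b$, and deduce (b) from (a) on cocycle representatives. The only difference is the (immaterial) order in which you apply $\chi\circ\phi=\id_{G_2}$ and the compatibility of $(\chi,\xi)$ with the actions, which the paper does in the reverse sequence.
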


\begin{proof}
(a)\quad Take $f\in \C^{r}(G_1,A_1)$ and $g\in \C^{s}(G_2,B_2)$. We have on the one hand
\[
(\phi,\psi_C)^*(f\cup (\chi,\xi)^*g)= (\phi,\psi_C)^*(f\cup (\xi\circ g\circ\chi))= \psi_{C}\circ (f\cup (\xi\circ g\circ\chi))\circ\phi,
\]
and on the other hand
\[
(\phi,\psi_A)^*(f)\cup g= 
(\psi_{A}\circ f\circ\phi)\cup g.
\]
We now show that both cochains are equal. For $(\sigma_1,\cdots,\sigma_{r+s}) \in G_2^{r+s}$ we have
\[
\begin{split}
(&\psi_{C}\circ (f\cup (\xi\circ g\circ\chi))\circ\phi)(\sigma_1,\cdots,\sigma_{r+s})\\
= 
&\psi_{C}\circ(f\cup (\xi\circ g\circ\chi)) (\phi\sigma_1,\cdots,\phi\sigma_{r+s})\\
=&\psi_{C}\circ (f(\phi\sigma_1,\cdots,\phi\sigma_{r})\times \phi\sigma_1\cdots\phi\sigma_{r}\cdot (\xi\circ g\circ\chi)(\phi\sigma_{r+1},\cdots,\phi\sigma_{r+s})).
\end{split}
\]
Since $(\chi,\xi)$ is a morphism of discrete modules, the latter expression is equal to
\[
\begin{split}
    &\psi_{C}\circ (f(\phi\sigma_1,\cdots,\phi\sigma_{r})\times
    \xi\circ(   \chi\phi\sigma_1\cdots\chi\phi\sigma_{r}\cdot g(\chi\phi\sigma_{r+1},\cdots,\chi\phi\sigma_{r+s}))=\\
    &\psi_{C}\circ (f(\phi\sigma_1,\cdots,\phi\sigma_{r})\times \xi\circ(   \sigma_1\cdots\sigma_{r}\cdot g(\sigma_{r+1},\cdots,\sigma_{r+s})),
\end{split}
\]
which by the commutativity of diagram (\ref{twisted preserving of bilinear maps}), is
\[
\begin{split}
&\psi_{A}\circ f(\phi\sigma_1,\cdots,\phi\sigma_{r}) \times \sigma_1\cdots\sigma_{r}\cdot  g(\sigma_{r+1},\cdots,\sigma_{r+s}) = \\
&(\psi_{A}\circ f\circ\phi)(\sigma_1,\cdots,\sigma_{r})\times \sigma_1\cdots\sigma_{r}\cdot  g(\sigma_{r+1},\cdots,\sigma_{r+s}) = \\
&((\psi_{A}\circ f\circ\phi)\cup g) (\sigma_1,\cdots,\sigma_{r+s}). 
\end{split}
\]
(b)\quad This follows immediately from (a) and the definition of cup products between cohomology classes (\ref{Cup product between cohomology classes}). 
\end{proof}

\begin{cor}
\label{twisted functoriality of cup product with a fixed group}
For a fixed profinite group $G$, and discrete $G$-module morphisms $\psi_A:A_1\to A_2, \psi_B:B_2\to B_1, \psi_C:C_1\to C_2$, if the diagram
\[
    \begin{tikzcd}
    A_1  \qquad
    \times \qquad 
    B_1 \quad \arrow[d, "\psi_A", xshift=-8ex] \arrow[r] &
    \quad C_1 \arrow[d, "\psi_C"] \\
    A_2 \qquad \times \qquad B_2 \quad  \arrow[u, "\psi_B", xshift=6ex] \arrow[r] & \quad C_2 
    \end{tikzcd}
\]
is commutative, then the diagrams
\[
\begin{tikzcd}
    \C^{r}(G,A_1) \arrow[d, "\psi_{A*}"] &
    \times &
    \C^{s}(G,B_1) \quad  \arrow[r, "\cup"] &
    \quad \C^{r+s}(G,C_1) \arrow[d, "\psi_{C*}"] \\
    \C^{r}(G,A_2) & \times & \C^{s}(G,B_2) \arrow[u, "\psi_{B*}"] \quad \arrow[r, "\cup"] & \quad \C^{r+s}(G,C_2) ,
\end{tikzcd}
\]
\[
\begin{tikzcd}
    \Ho^{r}(G,A_1) \arrow[d, "\psi_{A*}"] &
    \times &
    \Ho^{s}(G,B_1) \quad  \arrow[r, "\cup"] &
    \quad \Ho^{r+s}(G,C_1) \arrow[d, "\psi_{C*}"] \\
    \Ho^{r}(G,A_2) & \times & \Ho^{s}(G,B_2) \arrow[u, "\psi_{B*}"] \quad \arrow[r, "\cup"] & \quad \Ho^{r+s}(G,C_2) ,
\end{tikzcd}
\]
are commutative.

\end{cor}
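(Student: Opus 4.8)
The plan is to derive this corollary as the special case of Proposition \ref{twisted functoriality of the cup product} in which the two groups coincide. Concretely, I would set $G_1 = G_2 = G$ and take $\phi = \id_G$ and $\chi = \id_G$, with the module morphisms $\psi_A$ and $\psi_C$ as given and $\xi := \psi_B$.

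First I would check that the hypotheses of the proposition hold under this choice. The requirement $\chi \circ \phi = \id_{G_2}$ becomes $\id_G \circ \id_G = \id_G$, which is automatic. The commutativity of diagram (\ref{twisted preserving of bilinear maps}) is, after the substitution $\xi = \psi_B$, exactly the commutative square assumed in the statement of the corollary, so no further verification is needed there.

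Next I would translate the conclusion of the proposition into the notation of the corollary. Using the abbreviation introduced for the case $G_1 = G_2 = G$, $\phi = \id_G$, namely $(\id_G, \psi)^* = \psi_*$, the three induced morphisms simplify to $(\phi, \psi_A)^* = \psi_{A*}$, $(\chi, \xi)^* = \psi_{B*}$, and $(\phi, \psi_C)^* = \psi_{C*}$. Substituting these into the two diagrams of Proposition \ref{twisted functoriality of the cup product}(a) and (b) yields precisely the two diagrams asserted in the corollary, both at the level of cochains and at the level of cohomology classes.

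The only real obstacle is bookkeeping: one must match the naming conventions ($\xi$ versus $\psi_B$) and confirm that the identity substitutions collapse the general induced morphisms $(\phi,\psi)^*$ and $(\chi,\xi)^*$ to the pushforward maps $\psi_*$. There is no additional computation to perform, since all the analytic content—unwinding the cup product of cochains and applying the morphism-of-discrete-modules identity together with the commutativity of diagram (\ref{twisted preserving of bilinear maps})—was already carried out explicitly in the proof of the proposition.
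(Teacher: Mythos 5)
Your proposal is correct and is exactly the argument the paper intends: the corollary is stated without proof precisely because it is the specialization of Proposition \ref{twisted functoriality of the cup product} to $G_1=G_2=G$, $\phi=\chi=\id_G$, $\xi=\psi_B$, under which the induced maps $(\phi,\psi_A)^*$, $(\chi,\xi)^*$, $(\phi,\psi_C)^*$ collapse to $\psi_{A*}$, $\psi_{B*}$, $\psi_{C*}$. Your verification of the hypotheses ($\chi\circ\phi=\id$ and the commutativity of diagram (\ref{twisted preserving of bilinear maps})) matches what is needed, so nothing is missing.
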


\medskip
\medskip

\section{Definition of the augmented cup product}
\label{section about Augmented cup products}

\subsection{Tate Products}
\label{section about Tate products}
\quad\\
In order to define the augmented cup product we need to define one more notion, and that is the notion of a \textit{Tate product}.

\begin{defin}
\label{Tate product}
A \textbf{Tate product} is a data consisting of two short exact sequences of discrete $G$-modules:
\[
    0 \xrightarrow{} A'\xrightarrow{i} A\xrightarrow{j} A''\xrightarrow{} 0  , \]\[
    0 \xrightarrow{} B'\xrightarrow{i} B\xrightarrow{j} B''\xrightarrow{} 0  , 
\]
and two $G$-bilinear maps:
\[
    A'\times B\to C   \quad , \quad    A\times B'\to C ,
\]
coinciding on $A'\times B'$.  Such data is denoted by $(A,B,C)$.
\end{defin}

This definition can be generalized by considering $A,B,C$ as objects of any abelian category. For our purposes it is sufficient to restrict the definition to the category of $G$-modules as above. 
\begin{exam}
\label{Example of Tate product from one short exact sequence}
\rm
Let $A'\leq A$ and $C$ be discrete $G$-modules. If we have a $G$-bilinear map $\circ: A'\times A\to C$, which commutes on $A'\times A'$, then $(A,A,C)$ forms a Tate product. To see this, consider the short exact sequence
\[
0\to A'\to A\to A/A'\to 0,
\]
and the pairings
\[
\begin{split}
A'\times A\to C \qquad &, \qquad A\times A'\to C \\    
(a',a)\mapsto a'\circ a  \qquad &, \qquad (a,a')\mapsto a'\circ a.
\end{split}
\]
\end{exam}
\medskip
The Tate products form a category. Namely, consider Tate products $(A_1,B_1,C_1)$ over a profinite group $G_1$, and $(A_2,B_2,C_2)$ over a profinite group $G_2$. A \textbf{morphism of Tate products}, 
\[
(\phi,\psi):(A_1,B_1,C_1)\to(A_2,B_2,C_2),
\]
consists of a morphism of profinite groups $\phi: G_2\to G_1$, and morphisms of abelian groups 
\[
\psi_{A'}:A_1'\to A_2', \quad \psi_{A}:A_1\to A_2, \quad \psi_{A''}:A_1''\to A_2'',
\]
\[
\psi_{B'}:B_1'\to B_2', \quad \psi_{B}:B_1\to B_2, \quad \psi_{B''}:B_1''\to B_2'', 
\]
\[
\psi_{C}:C_1\to C_2,
\]
such that: \\
(1) \quad The diagrams \begin{equation}
\label{commutative diagram A of Tate product morphism}
    \begin{tikzcd}
0 \arrow[r] & 
A_1'\arrow[r, "i"]\arrow[d,"\psi_{A'}"] & 
A_1\arrow[r, "j"]\arrow[d,"\psi_A"] &
A_1''\arrow[r]\arrow[d,"\psi_{A''}"] &
0 \\ 
0 \arrow[r] &
A_2'\arrow[r, "i"] & 
A_2\arrow[r, "j"] & 
A_2''\arrow[r] &
0,
    \end{tikzcd}
\end{equation}

\begin{equation}
\label{commutative diagram B of Tate product morphism}
    \begin{tikzcd}
0 \arrow[r] & 
B_1'\arrow[r, "i"]\arrow[d,"\psi_{B'}"] & 
B_1\arrow[r, "j"]\arrow[d,"\psi_B"] &
B_1''\arrow[r]\arrow[d,"\psi_{B''}"] &
0 \\ 
0 \arrow[r] &
B_2'\arrow[r, "i"] & 
B_2\arrow[r, "j"] & 
B_2''\arrow[r] &
0,
    \end{tikzcd}
\end{equation}
are exact and commutative.\\
(2) \quad The morphisms 
\[
(\phi,\psi_{A'}),(\phi,\psi_A),(\phi,\psi_{A''}),(\phi,\psi_{B'}),(\phi,\psi_B),(\phi,\psi_{B''}),(\phi,\psi_C)
\] 
are morphisms of discrete modules. \\
(3) \quad The bilinear maps are preserved. Namely, the following diagrams are commutative:
\begin{center}
        \begin{tikzcd}
        A_1'\quad \arrow[d,"\psi_{A'}", xshift=-5ex] \times \quad
        B_1\arrow[d,"\psi_B", xshift=5ex]\arrow[r] &
        C_1\arrow[d,"\psi_C"]\\
        A_2' \quad \times
        \quad B_2\arrow[r] &
        C_2 , 
        \end{tikzcd}
        \quad
        \begin{tikzcd}
        A_1\quad \arrow[d,"\psi_A", xshift=-5ex] \times \quad
        B_1'\arrow[d,"\psi_{B'}", xshift=5ex]\arrow[r] &
        C_1\arrow[d,"\psi_C"]\\
        A_2 \quad \times
        \quad B_2'\arrow[r] &
        C_2 .
        \end{tikzcd}
\end{center}

\begin{rem}
\label{Remark about the existence of the cup products A'XB, AXB'}
\rm The $G$-bilinear maps $ A'\times B\to C$ and $A\times B'\to C$, give rise to the cup products:
\[
\begin{split}
    &\cup : {\rm C}^r(G,A')\times {\rm C}^s(G,B)\to {\rm C}^{r+s}(G,C) ,\\
    &\cup : {\rm C}^r(G,A)\times {\rm C}^s(G,B')\to {\rm C}^{r+s}(G,C) ,\\
    &\cup : \Ho^r(G,A')\times \Ho^s(G,B)\to \Ho^{r+s}(G,C) ,\\
    &\cup : \Ho^r(G,A)\times \Ho^s(G,B')\to \Ho^{r+s}(G,C) .
    \end{split}
\]
\end{rem}

\medskip
\medskip

\subsection{The augmented cup product}
\quad\\
For the rest of this paper, we fix a Tate product $(A,B,C)$ over $G$.

Recall that by the notations in Section \ref{Induced morphisms of cochains and cohomology groups}, the morphisms 
\begin{equation}
    \label{the morphisms j}
    j:A\to A'',\quad j:B\to B'',
\end{equation}
induce the morphisms of cochains
\begin{equation}
    \label{the morphisms j_*}
j_*: \C^{r}(G,A)\to \C^{r}(G,A''), \quad j_*: \C^{r}(G,B)\to \C^{r}(G,B'').
\end{equation}
Also recall that $A'$ and $B'$ may be identified with the kernels of the morphisms in (\ref{the morphisms j}), respectively, and that $\C^{r}(G,A')$ and $\C^{r}(G,B')$ may be identified with the kernels of the morphisms in (\ref{the morphisms j_*}), respectively.

\begin{defin}
\label{augmented cup product}
We define the \textbf{augmented cup product}
\[
\cup_{\aug}: {\rm H}^r(G,A'')\times {\rm H}^s(G,B'')\to {\rm H}^{r+s+1}(G,C)
\]
as follows:\\
Given $\alpha''\in {\rm H}^r(G,A{''})$, $\beta''\in {\rm H}^s(G,B'')$, we choose $f''\in {\rm Z}^r(G,A'')$, $g{''}\in {\rm Z}^s(G,B'')$ such that $\alpha''=[f'']$, $\beta''=[g'']$. We choose $f\in {\rm C}^r(G,A)$, $g\in {\rm C}^s(G,B)$ such that $j_*f=f''$, $j_*g=g''$, and define:
\[
\alpha''\cup_{\aug}\beta'':= [d_rf\cup g + (-1)^rf\cup d_sg].
\]
\end{defin}

\begin{prop}
\label{well definednece of augmented cup product}
The augmented cup product $\cup_{\aug}$ is well defined, i.e.:
(a) The cup products in the definition above have meanings.\\
(b) $d_rf\cup g + (-1)^rf\cup d_sg$ is a cocycle, so it indeed has a cohomology class.\\
(c) The construction is independent of the choice of the representatives.

\end{prop}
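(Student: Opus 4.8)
The plan is to verify parts (a), (b), (c) in turn, using throughout the Leibnitz rule (\ref{Leibnitz of cup products}) together with the two cup products supplied by Remark \ref{Remark about the existence of the cup products A'XB, AXB'}, and paying close attention at each step to which of the two bilinear maps $A'\times B\to C$ and $A\times B'\to C$ is being invoked. Part (a) is the quickest: since $j_*f=f''$ is a cocycle, Proposition \ref{(a) df is in A' (b)f is in A' up to coboundary}(a) gives $d_rf\in \C^{r+1}(G,A')$, and symmetrically $d_sg\in \C^{s+1}(G,B')$. Hence $d_rf\cup g$ is legitimately formed through the pairing $A'\times B\to C$, and $f\cup d_sg$ through $A\times B'\to C$, so both summands lie in $\C^{r+s+1}(G,C)$ and their sum is meaningful.

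For part (b) I would apply $d_{r+s+1}$ to $d_rf\cup g+(-1)^rf\cup d_sg$ and expand each term by the Leibnitz rule. Because $d_{r+1}(d_rf)=0$ and $d_{s+1}(d_sg)=0$, every surviving term is a copy of $d_rf\cup d_sg$, and the two copies carry the opposite signs $(-1)^{r+1}$ and $(-1)^r$. The crucial point, and the precise place where the Tate product axiom enters, is that the first copy arises from the pairing $A'\times B\to C$ while the second arises from $A\times B'\to C$; since $d_rf\in\C^{r+1}(G,A')$ and $d_sg\in\C^{s+1}(G,B')$, both cup products factor through $A'\times B'$, where the two pairings coincide. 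The two terms are therefore the same cochain and cancel, so the sum is a cocycle.

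For part (c) I would separate the various choices. First fix $f''$ and $g''$ and replace $f$ by another lift: setting $h:=f_1-f\in\C^r(G,A')$ (the difference lies in $A'$ because $j_*$ kills it), the Leibnitz rule applied to $h\cup g$ via the pairing $A'\times B\to C$ shows the two candidate cocycles differ by $d_{r+s}(h\cup g)$, a coboundary; the mirror-image argument through $A\times B'\to C$ handles a change in the lift $g$. Next, to vary the cocycle $f''$ inside its class, I would write $f''_1-f''_2=d_{r-1}p''$, lift $p''$ to $p\in\C^{r-1}(G,A)$, and note that $f_1-d_{r-1}p$ is a lift of $f''_2$; with this compatible choice the difference of cocycles reduces to $(-1)^rd_{r-1}p\cup d_sg=(-1)^r d_{r+s}(p\cup d_sg)$, a coboundary, by Leibnitz applied to $p\cup d_sg$ via $A\times B'\to C$. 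A symmetric computation disposes of a change in $g''$. Combining these reductions shows $[d_rf\cup g+(-1)^rf\cup d_sg]$ depends only on $\alpha''$ and $\beta''$.

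The main obstacle is bookkeeping rather than any conceptual difficulty: at each manipulation one must check that the factors of every cup product actually lie in the primed submodules exactly when the pairing on $A'\times B'$ is being used, so that the two given bilinear maps are only ever compared on the overlap where they agree. This coincidence is what forces the cancellation in part (b), and it is used implicitly in part (c) wherever a term $d_rf\cup d_sg$ or $p\cup d_sg$ is rewritten; once this is tracked consistently, every displayed difference is visibly a coboundary.
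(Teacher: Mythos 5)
Your proof is correct, and parts (a) and (b) coincide with the paper's argument. The only real divergence is in part (c): the paper treats an arbitrary pair of representatives $f_1,f_2$ (allowing the cocycle $j_*f_i$ and the lift to vary simultaneously) and invokes Proposition \ref{(a) df is in A' (b)f is in A' up to coboundary}(b) to decompose $f_1-f_2=f'+h$ with $f'\in\C^r(G,A')$ and $h\in\B^r(G,A)$, then kills each piece separately: $[d_rf'\cup g+(-1)^rf'\cup d_sg]=[d_{r+s}(f'\cup g)]=0$ by the Leibnitz rule, and $[d_rh\cup g+(-1)^rh\cup d_sg]=0$ because $d_rh=0$ and a coboundary cupped with a cocycle is a coboundary. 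You instead split the ambiguity into two sequential reductions — change of lift with the cocycle fixed, then change of cocycle via the compatible lift $f_1-d_{r-1}p$ — and build the coboundary witnesses $d_{r+s}(h\cup g)$ and $d_{r+s}(p\cup d_sg)$ by hand. This is the same algebra: your $p$ plays exactly the role of the preimage constructed in the proof of Proposition \ref{(a) df is in A' (b)f is in A' up to coboundary}(b), so the two arguments differ only in organization, yours being self-contained where the paper factors through that lemma. One genuine merit of your write-up is that you make explicit where the Tate-product axiom enters: the cancellation of the two copies of $d_rf\cup d_sg$ in (b), and the rewriting of mixed terms such as $h\cup d_sg$ in (c), compare a cup product formed via $A'\times B\to C$ with one formed via $A\times B'\to C$, and these agree only because both factors take values in the primed submodules, where the two pairings coincide. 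The paper's computations rely on this coincidence silently, so your bookkeeping is a worthwhile clarification rather than a redundancy.
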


\begin{proof}
(a) By Proposition \ref{(a) df is in A' (b)f is in A' up to coboundary}(a), $d_rf\in {\rm C}^{r+1}(G,A')$ and $d_sg\in {\rm C}^{s+1}(G,B')$. By Remark \ref{Remark about the existence of the cup products A'XB, AXB'}, the cup products $d_rf\cup g, f\cup d_sg$ have meanings.\\
(b) By (\ref{Leibnitz of cup products}), 
\[
d_{r+s+1}(d_rf\cup g)=d_{r+1}(d_rf)\cup g + (-1)^{r+1}d_rf\cup d_sg=(-1)^{r+1}d_rf\cup d_sg,
\]
and
\[
d_{r+s+1}(f\cup d_sg)=d_rf\cup d_sg +(-1)^{r}f\cup d_{s+1}(d_sg)=d_rf\cup d_sg.
\]
Therefore we have:
\[
d_{r+s+1}(d_rf\cup g + (-1)^rf\cup d_sg)=(-1)^{r+1}d_rf\cup d_sg+(-1)^rd_rf\cup d_sg=0 .
\]
(c) Consider $f_1, f_2\in {\rm C}^r(G,A)$ with $j_*(f_1),j_*(f_2)\in \Z^r(G,A'')$ and $[j_*(f_1)]=[j_*(f_2)]=\alpha''$. Then $j_*(f_1-f_2)\in {\rm B}^r(G,A'')$. By Proposition \ref{(a) df is in A' (b)f is in A' up to coboundary}(b), $f_1-f_2$ is in ${\rm C}^r(G,A')$ up to a coboundary. Thus,
\[
f_1-f_2=f'+h,
\]
for some $f'\in {\rm C}^r(G,A')$ and $h\in {\rm B}^r(G,A)$.\\
Hence, 
\[
\begin{split}
&[d_rf_1\cup g + (-1)^rf_1\cup d_sg]-[d_rf_2\cup g + (-1)^rf_2\cup d_sg] \\
=&[d_r(f_1-f_2)\cup g + (-1)^r(f_1-f_2)\cup d_sg] \\
=&[d_r(f'+h)\cup g + (-1)^r(f'+h)\cup d_sg] \\
=&[d_rf'\cup g + (-1)^rf'\cup d_sg]+[d_rh\cup g + (-1)^rh\cup d_sg].    
\end{split}
\]
The separation in the last row to two different cohomology classes is possible, since the corresponding cochains are indeed cocycles (and even coboundaries) as we now show. 
In view of Remark \ref{Remark about the existence of the cup products A'XB, AXB'}, there is a cup product $f'\cup g$, and 
(\ref{Leibnitz of cup products}) gives,
\[
[d_rf'\cup g + (-1)^rf'\cup d_sg]=0.
\] 
Further, $d_rh=0$, and by Proposition \ref{Cup product between cocycle and coboundary}, $h\cup d_sg$ is a coboundary (as a cup product of two coboundaries). Hence, 
\[
[d_rh\cup g + (-1)^rh\cup d_sg]=0.
\]
A similar argument shows that the construction is independent of the choice of different representatives $g_1, g_2\in {\rm C}^s(G,B)$.\qedhere  

\end{proof}

\begin{rem}
\rm If there exists a $G$-bilinear map $A\times B\to C$ which induces the maps $A'\times B\to C$ and $A\times B'\to C$, then the augmented cup product is $0$. Indeed, in this case, for $f,g$ as in Definition \ref{augmented cup product} we have a cup product $f\cup g$. Hence, by (\ref{Leibnitz of cup products}), 
\[
d_{r+s}(f\cup g)=d_rf\cup g + (-1)^rf\cup d_sg.
\]
Thus,
\[
\alpha''\cup_{\aug}\beta''= [d_rf\cup g + (-1)^rf\cup d_sg]=[d_{r+s}(f\cup g)]=0.
\]
\end{rem}

\begin{exam}
\label{acup 0X0->1}
\rm
We compute explicitly
\[
\cup_{\aug}: \Ho^0(G,A'')\times \Ho^0(G,B'')\to {\rm H}^{1}(G,C).
\]
Take $\alpha''\in \Ho^0(G,A'')=(A'')^G$ with representative $a\in \C^0(G,A)=A$, such that $\alpha''=[j_*a]$. Similarly, take $\beta''\in \Ho^0(G,B'')$ with representative $b\in B$, such that $\beta''=[j_*b]$. We show that:
\[
\alpha''\acup\beta''=[h],
\]
where $h\in \Z^1(G,C)$ is defined for $\sigma\in G$ by:
\[
h(\sigma)=(\sigma a-a)\times \sigma b+a\times (\sigma b-b).
\]
The multiplications on the right are those of the bilinear maps $A'\times B\to C$, $A\times B'\to C$. 

By definition,
\[
\alpha''\acup\beta''=[d_0a\cup b + a\cup d_0b],
\]
where the $1$-cocycle in the RHS is:
\[
\begin{split}
    (d_0a\cup b + a\cup d_0b)(\sigma)&=(d_0a\cup b)(\sigma) + (a\cup d_0b)(\sigma)=\\
    d_0a(\sigma)\times\sigma b+a\times d_0b(\sigma)&=(\sigma a-a)\times \sigma b+a\times (\sigma b-b).
\end{split}
\]

Note that since 
\[j(\sigma a-a)=j(\sigma a)-ja=\sigma (ja)-ja=ja-ja=0,
\]
$(\sigma a-a)$ is an element of $A'=\Ker j$. Similarly, $(\sigma b-b)\in B'$. 
\end{exam}

\medskip

For the rest of this paper, when we write that a cochain $f$ is a \textbf{representative} of a cohomology class $\alpha''$, we mean that it satisfies
\[
\alpha''=[j_*f].
\]
\medskip

\section{Properties of the augmented cup product}
\label{Section about properties of the augmented cup product}

\subsection{Graded commutativity}
\begin{prop}
For every $\alpha''\in\Ho^r(G,A''), \beta''\in\Ho^s(G,B'')$, we have
\[
\alpha''\acup\beta'' = (-1)^{rs}\cdot \beta''\acup\alpha''.
\]
\end{prop}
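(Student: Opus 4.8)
The plan is to reduce the statement to the graded commutativity of the \emph{ordinary} cup product, carried out explicitly at the level of cochains. The first step is to make sense of $\beta''\acup\alpha''$: swapping the two bilinear maps of $(A,B,C)$ yields a Tate product $(B,A,C)$, whose pairings $B'\times A\to C$ and $B\times A'\to C$ are $(b',a)\mapsto a\times b'$ and $(b,a')\mapsto a'\times b$; here the Tate axiom that the original pairings coincide on $A'\times B'$ is exactly what guarantees the two swapped pairings coincide on $B'\times A'$. Writing $\cup'$ for the cup product attached to the swapped pairings, and choosing a single pair of representatives $f\in\C^r(G,A)$ of $\alpha''$ and $g\in\C^s(G,B)$ of $\beta''$, Definition \ref{augmented cup product} gives
\[
\alpha''\acup\beta''=[\,d_rf\cup g+(-1)^rf\cup d_sg\,],\qquad
\beta''\acup\alpha''=[\,d_sg\cup' f+(-1)^sg\cup' d_rf\,].
\]
Since both classes are built from the same $f$ and $g$, it suffices to compare the two representing cocycles $\Phi$ and $\Psi$ and to show $\Phi-(-1)^{rs}\Psi$ is a coboundary.

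The key input is the cochain-level graded commutativity of the ordinary cup product: for $u\in\C^p(G,M)$ and $v\in\C^q(G,N)$ with a pairing $M\times N\to C$ and its swap, there is a natural homotopy $H$ and signs $\eps_1,\eps_2\in\{\pm1\}$ with
\[
u\cup v-(-1)^{pq}\,v\cup' u=d_{p+q-1}H(u,v)+\eps_1\,H(d u,v)+\eps_2\,H(u,d v).
\]
I would apply this to each summand of $\Phi$: to the first with $u=d_rf\in\Z^{r+1}(G,A')$, $v=g$, and to the second with $u=f$, $v=d_sg\in\Z^{s+1}(G,B')$. Because $d(d_rf)=0$ and $d(d_sg)=0$, the homotopy term that differentiates the cocycle factor drops out in each case, leaving a genuine coboundary together with a single cross term. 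Using $d_rf=df$ and $d_sg=dg$, the surviving cross term from the first summand is $\eps_2\,H_{r+1,s+1}(d_rf,d_sg)$ and from the second is $(-1)^r\eps_1\,H_{r+1,s+1}(d_rf,d_sg)$; crucially, these are the \emph{same} cochain, because the two pairings $A'\times B\to C$ and $A\times B'\to C$ restrict to the common pairing on $A'\times B'$, so no ambiguity of $\cup$ versus $\cup'$ arises on the cocycles $d_rf$, $d_sg$.

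A short sign check then finishes the argument: graded commutativity in bidegrees $(r+1,s)$ and $(r,s+1)$ contributes $(-1)^{(r+1)s}=(-1)^{rs}(-1)^s$ and, after the ambient $(-1)^r$, $(-1)^{r}(-1)^{r(s+1)}=(-1)^{rs}$, so that modulo coboundaries and the cross terms one gets $\Phi\equiv(-1)^{rs}\big(d_sg\cup' f+(-1)^sg\cup' d_rf\big)=(-1)^{rs}\Psi$. I expect the main obstacle to be precisely the bookkeeping hidden here: one must fix an explicit homotopy $H$ realizing graded commutativity on the bar complex (e.g.\ a cup-one type product), pin down its signs $\eps_1,\eps_2$, and verify that $\eps_2+(-1)^r\eps_1=0$ so that the two identical cross terms cancel rather than add. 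The degenerate case $r=s=0$ of Example \ref{acup 0X0->1}, where the two representing $1$-cocycles are checked to be literally equal, serves as a consistency check and indicates that this cancellation is exact.
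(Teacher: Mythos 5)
Your strategy is correct, and in outline it coincides with the paper's: the same single pair of representatives $f,g$, reduction to graded commutativity of the ordinary cup product applied to the two summands, and the same final sign bookkeeping (your last display matches the paper's computation exactly). The difference is in how that reduction is justified, and here your version is actually the more careful one. The paper's proof is three lines: it invokes the cohomology-level graded commutativity (Koch, Theorem 3.27) to replace $[d_rf\cup g+(-1)^rf\cup d_sg]$ by $[(-1)^{(r+1)s}g\cup d_rf+(-1)^{r(s+1)}(-1)^rd_sg\cup f]$ term by term. Read literally, that step is not licensed: $d_rf\cup g$ and $f\cup d_sg$ are not cocycles individually (only their signed sum is), so they carry no cohomology classes to which the theorem could be applied, and graded commutativity is genuinely false at the cochain level (already in bidegree $(1,0)$, with $v=b\in\C^0$: $(u\cup v)(\sigma)=u(\sigma)\times\sigma b$ while $(v\cup u)(\sigma)=u(\sigma)\times b$). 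Your homotopy formulation is exactly what repairs this, and the cancellation you flag as the main obstacle does hold. With the standard cup-one product on the inhomogeneous complex, whose coboundary identity is
\[
u\cup v-(-1)^{pq}\,v\cup u=(-1)^{p+q-1}\,d(u\cup_1v)+(-1)^{p+q}\,(du)\cup_1v+(-1)^{q}\,u\cup_1(dv)
\]
for $u\in\C^p$, $v\in\C^q$ (with $v\cup u$ taken with the swapped pairing), the cross term produced by $d_rf\cup g$ (bidegree $(r+1,s)$) is $(-1)^{s}\,d_rf\cup_1d_sg$, while the one produced by $(-1)^rf\cup d_sg$ (bidegree $(r,s+1)$) is $(-1)^{r}(-1)^{r+s+1}\,d_rf\cup_1d_sg=(-1)^{s+1}\,d_rf\cup_1d_sg$; by the Tate axiom these are the same cochain, so the coefficients sum to zero and your condition $\eps_2+(-1)^r\eps_1=0$ is verified. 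So the comparison is: the paper's formulation buys brevity at the cost of a step that is, strictly speaking, unjustified (though repairable precisely by your argument), while yours is the honest cochain-level proof; to complete it you only need to fix the cup-one product and the displayed identity once, after which the computation above is immediate, and your observation that the $r=s=0$ case gives literal equality of the two cocycles is consistent with the homotopy term vanishing in that bidegree.
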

\begin{proof}
    Let $f,g$ be representatives of $\alpha'',\beta''$ respectively, as in the definition of the augmented cup product. Then:
    \[
    \alpha''\cup_{\aug}\beta''= [d_rf\cup g + (-1)^rf\cup d_sg],
    \]
    \[
    \beta''\cup_{\aug}\alpha''= [d_sg\cup f+ (-1)^{s}g\cup d_rf].
    \]
    By the graded commutativity of the cup product \cite{Koch02}*{Theorem 3.27},
    \[
    \begin{split}
    \alpha''\cup_{\aug}\beta''&= [(-1)^{(r+1)s}g\cup d_rf + (-1)^{r(s+1)}(-1)^rd_sg\cup f]\\
    &= (-1)^{rs}[(-1)^{s}g\cup d_rf + (-1)^{2r}d_sg\cup f]\\
    &= (-1)^{rs}[d_sg\cup f+ (-1)^{s}g\cup d_rf]=(-1)^{rs}\cdot \beta''\acup\alpha''. \qedhere    
    \end{split}
    \]
\end{proof}

\medskip
\medskip

\subsection{Compatibility with the $G$-action}
\quad\\
For a discrete $G$-module $A$, define an action of $G$ on $\C^r(G,A)$ by:
\[
(\sigma\cdot f)(\tau_1,\ldots,\tau_r)=\sigma f(\sigma^{-1}\tau_1,\ldots,\sigma^{-1}\tau_r).
\] 
This action is compatible with the coboundary maps, hence it induces an action on $\Ho^r(G,A)$:
\[
\sigma\cdot [f]=[\sigma\cdot f].
\]
Further, the action commutes with morphisms of discrete modules, and hence with morphisms of cochains. Namely, consider a morphism of profinite groups $\phi: G_2\to G_1$ and a morphism $\psi: A_1\to A_2$ such that $(\phi,\psi)$ is a morphism of discrete modules. For the induced morphism of cochains 
\[
(\phi,\psi)^*: \C^r(G_1,A_1)\to \C^r(G_2,A_2) ,
\]
the diagram
\begin{center}
    \begin{tikzcd}
    \C^r(G_1,A_1)\arrow[r,"{(\phi,\psi)^*}"]\arrow[d,"\sigma"]&
    \C^r(G_2,A_2)\arrow[d,"\sigma"] \\
    \C^{r}(G_1,A_1)\arrow[r,"{(\phi,\psi)^*}"] &
    \C^{r}(G_2,A_2)
    \end{tikzcd}
\end{center}
is commutative.
In particular, recall that the Tate product $(A,B,C)$ consists of a $G$-module morphism $j:A\to A''$. For the induced morphism of cochains $j_*:\C^r(G,A)\to\C^r(G,A'')$ we have:
\[
\sigma\cdot(j_*f)=j_*(\sigma\cdot f).
\]

The following proposition shows that the $G$-action is compatible with the augmented cup product.
\begin{prop}
\label{Acup is G-bilinear}
For every $\sigma\in G$, $\alpha''\in {\rm H}^r(G,A{''})$, $\beta''\in {\rm H}^s(G,B'')$ we have:
\[
\sigma \cdot (\alpha'' \cup_{\aug} \beta'') = \sigma \cdot \alpha'' \cup_{\aug} \sigma \cdot \beta''.
\]
\end{prop}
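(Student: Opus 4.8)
The plan is to reduce the statement about cohomology classes to an identity at the level of cochains, where the definitions are completely explicit. Let $f \in \C^r(G,A)$ and $g \in \C^s(G,B)$ be representatives of $\alpha''$ and $\beta''$, so that $j_*f = f''$ and $j_*g = g''$ are cocycles representing the classes. By definition,
\[
\alpha'' \cup_{\aug} \beta'' = [d_rf \cup g + (-1)^r f \cup d_sg].
\]
The first step is to identify representatives for $\sigma \cdot \alpha''$ and $\sigma \cdot \beta''$. Since the $G$-action commutes with $j_*$, as recorded just before the proposition ($\sigma \cdot (j_*f) = j_*(\sigma \cdot f)$), the cochain $\sigma \cdot f$ is a representative of $\sigma \cdot \alpha''$, and likewise $\sigma \cdot g$ represents $\sigma \cdot \beta''$. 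Hence
\[
\sigma \cdot \alpha'' \cup_{\aug} \sigma \cdot \beta'' = [d_r(\sigma \cdot f) \cup (\sigma \cdot g) + (-1)^r (\sigma \cdot f) \cup d_s(\sigma \cdot g)].
\]

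The core of the argument is then to show that $\sigma$ acting on a cup product of cochains distributes over the two factors: for $u \in \C^r(G,A')$ and $v \in \C^s(G,B)$ (and symmetrically for the other pairing), I would establish the cochain-level identity
\[
\sigma \cdot (u \cup v) = (\sigma \cdot u) \cup (\sigma \cdot v).
\]
This is a direct computation from the definition of the cup product of cochains and the definition of the $G$-action on cochains, using that the underlying pairing $A' \times B \to C$ is $G$-bilinear (so $\sigma$ commutes through the $\times$) and that $\sigma^{-1}$ cancels appropriately in the shifted arguments. Combining this with the compatibility of the $G$-action with the coboundary maps, $d_r(\sigma \cdot f) = \sigma \cdot (d_rf)$, gives
\[
d_r(\sigma \cdot f) \cup (\sigma \cdot g) = \sigma \cdot (d_rf) \cup \sigma \cdot g = \sigma \cdot (d_rf \cup g),
\]
and similarly $(\sigma \cdot f) \cup d_s(\sigma \cdot g) = \sigma \cdot (f \cup d_sg)$. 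Summing, the cochain $d_r(\sigma \cdot f) \cup (\sigma \cdot g) + (-1)^r (\sigma \cdot f) \cup d_s(\sigma \cdot g)$ equals $\sigma \cdot (d_rf \cup g + (-1)^r f \cup d_sg)$, and passing to cohomology classes (using $\sigma \cdot [w] = [\sigma \cdot w]$) yields the desired equality.

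The one point deserving care, which I expect to be the main obstacle, is the bookkeeping in the cochain-level distributivity identity $\sigma \cdot (u \cup v) = (\sigma \cdot u) \cup (\sigma \cdot v)$: when one expands both sides using the shift-by-$\sigma^{-1}$ in the group arguments and the leading factor $x_1 \cdots x_r$ in the cup product formula \eqref{Cup product between two cochains groups}, one must verify that the group elements multiplying $v$ match up correctly after the substitution $\tau_i \mapsto \sigma^{-1}\tau_i$ and that the $G$-bilinearity of the pairing lets $\sigma$ pass through the product. This is routine but must be done honestly for both pairings $A' \times B \to C$ and $A \times B' \to C$; once it is in place, the rest follows formally from the already-established compatibility of the action with $d_r$ and with $j_*$.
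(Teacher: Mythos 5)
Your proof is correct and is essentially the paper's own argument: the paper likewise works with cochain representatives, uses the compatibility $\sigma\cdot(j_*f)=j_*(\sigma\cdot f)$ to recognize $\sigma\cdot f$ and $\sigma\cdot g$ as representatives of $\sigma\cdot\alpha''$ and $\sigma\cdot\beta''$, and combines the identity $\sigma\cdot(u\cup v)=(\sigma\cdot u)\cup(\sigma\cdot v)$ (asserted there simply as ``the cup product is $G$-bilinear'') with $\sigma\cdot(d_rf)=d_r(\sigma\cdot f)$ to conclude. The only difference is that you explicitly flag the cochain-level distributivity as the point needing honest verification, which the paper takes for granted; your caution is in fact warranted, since with the shift-by-$\sigma^{-1}$ formula as literally written the group-element bookkeeping does not close up, and the identity holds on the nose when the action is taken in its conjugation form $(\sigma\cdot f)(\tau_1,\ldots,\tau_r)=\sigma f(\sigma^{-1}\tau_1\sigma,\ldots,\sigma^{-1}\tau_r\sigma)$.
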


\begin{proof}
Let $f,g$ be representatives of $\alpha'',\beta''$, respectively. Since the cup product is $G$-bilinear, and the $G$-action is compatible with the coboundary maps, 
\begin{equation}
    \label{G-action on acup}
\begin{split}
\sigma \cdot (\alpha'' \cup_{\aug} \beta'')&=\sigma\cdot [d_rf\cup g + (-1)^rf\cup d_sg]\\
&=[\sigma\cdot (d_{r}f)\cup \sigma \cdot g+ (-1)^{r}\sigma\cdot f\cup \sigma \cdot (d_{s}g)]\\
&=[d_{r}(\sigma\cdot f)\cup \sigma \cdot g + (-1)^{r}\sigma\cdot f\cup d_{s}(\sigma \cdot g)].
\end{split}
\end{equation}
Since
\[
\sigma\cdot\alpha''=\sigma\cdot [j_*f]=[\sigma\cdot (j_*f)]=[j_*(\sigma\cdot f)],
\]
$\sigma\cdot f$ is a representative of $\sigma\cdot\alpha''$. Similarly, $\sigma\cdot g$ is a representative of $\sigma\cdot\beta''$.
Hence, the final expression in (\ref{G-action on acup}) is
\[ 
\sigma \cdot \alpha'' \cup_{\aug} \sigma \cdot \beta''. \qedhere
\]
\end{proof}

\medskip
\medskip

\subsection{Functoriality}
\quad\\
By the discussion in Section \ref{section about Tate products}, given a morphism of Tate products $(\phi,\psi):(A_1,B_1,C_1)\to(A_2,B_2,C_2)$, we have induced morphisms of cochains and cohomology groups for any two corresponding modules of the Tate products. Namely,
\[
\begin{split}
&(\phi,\psi_{A'})^*: \C^r(G_1,A_1')\to \C^r(G_2,A_2'), \quad \Ho^r(G_1,A_1')\to \Ho^r(G_2,A_2'),\\
&(\phi,\psi_{A})^*: \C^r(G_1,A_1)\to \C^r(G_2,A_2), \quad \Ho^r(G_1,A_1)\to \Ho^r(G_2,A_2),\\
&(\phi,\psi_{A''})^*: \C^r(G_1,A_1'')\to \C^r(G_2,A_2''), \quad \Ho^r(G_1,A_1'')\to \Ho^r(G_2,A_2''),\\
&(\phi,\psi_{B'})^*: \C^s(G_1,B_1')\to \C^s(G_2,B_2'), \quad \Ho^s(G_1,B_1')\to \Ho^s(G_2,B_1'),\\
&(\phi,\psi_{B})^*: \C^s(G_1,B_1)\to \C^s(G_2,B_2), \quad \Ho^s(G_1,B_1)\to \Ho^s(G_2,B_2),\\
&(\phi,\psi_{B''})^*: \C^s(G_1,B_1'')\to \C^s(G_2,B_2''), \quad \Ho^s(G_1,B_1'')\to \Ho^s(G_2,B_2''),\\
&(\phi,\psi_{C})^*: \C^{r+s}(G_1,C_1)\to \C^{r+s}(G_2,C_2), \quad \Ho^{r+s}(G_1,C_1)\to \Ho^{r+s}(G_2,C_2).
\end{split}
\]
Furthermore, by Proposition \ref{functoriality of the cup product} and condition (3) in the definition of a morphism of Tate products in Section \ref{section about Tate products}, the diagrams
\begin{equation}
    \label{compatibility 1 of Tate product morphism with the cup product}
\begin{tikzcd}
    \C^{r}(G_1,A_1') \arrow[d, "{(\phi,\psi_{A'})^*}"] &
    \times &
    \C^{s}(G_1,B_1) \quad \arrow[d, "{(\phi,\psi_B)^*}"] \arrow[r, "\cup"] &
    \quad \C^{r+s}(G_1,C_1) \arrow[d, "{(\phi,\psi_C)^*}"] \\
    \C^{r}(G_2,A_2') & \times & \C^{s}(G_2,B_2) \quad \arrow[r, "\cup"] & \quad \C^{r+s}(G_2,C_2) ,
\end{tikzcd}
\end{equation}
\begin{equation}
    \label{compatibility 2 of Tate product morphism with the cup product}
\begin{tikzcd}
    \C^{r}(G_1,A_1) \arrow[d, "{(\phi,\psi_{A})^*}"] &
    \times &
    \C^{s}(G_1,B_1') \quad \arrow[d, "{(\phi,\psi_{B'})^*}"] \arrow[r, "\cup"] &
    \quad \C^{r+s}(G_1,C_1) \arrow[d, "{(\phi,\psi_C)^*}"] \\
    \C^{r}(G_2,A_2) & \times & \C^{s}(G_2,B_2') \quad \arrow[r, "\cup"] & \quad \C^{r+s}(G_2,C_2) ,
\end{tikzcd}
\end{equation}
are commutative.

The following proposition shows the functoriality of the augmented cup product.

\begin{prop}
\label{functoriality of Acup}
For a morphism of Tate products $(\phi,\psi):(A_1,B_1,C_1)\to(A_2,B_2,C_2)$, the diagram
\[
\begin{tikzcd}
    \Ho^{r}(G_1,A_1'') \arrow[d, "{(\phi,\psi_{A''})^*}"] &
    \times &
    \Ho^{s}(G_1,B_1'') \quad \arrow[d, "{(\phi,\psi_{B''})^*}"] \arrow[r, "\acup"] &
    \quad \Ho^{r+s+1}(G_1,C_1) \arrow[d, "{(\phi,\psi_C)^*}"] \\
    \Ho^{r}(G_2,A_2'') & \times & \Ho^{s}(G_2,B_2'') \quad \arrow[r, "\acup"] & \quad \Ho^{r+s+1}(G_2,C_2) 
\end{tikzcd}
\]
is commutative.
\end{prop}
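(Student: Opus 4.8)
The plan is to verify commutativity directly at the level of cochains and then pass to cohomology. Fix $\alpha'' \in \Ho^r(G_1, A_1'')$ and $\beta'' \in \Ho^s(G_1, B_1'')$, and choose cocycle representatives $f'' \in \Z^r(G_1, A_1'')$, $g'' \in \Z^s(G_1, B_1'')$ together with lifts $f \in \C^r(G_1, A_1)$, $g \in \C^s(G_1, B_1)$ satisfying $j_* f = f''$ and $j_* g = g''$, so that $\alpha'' \acup \beta'' = [d_r f \cup g + (-1)^r f \cup d_s g]$. The two composites in the diagram will be compared by producing compatible representatives on the $G_2$-side and showing that $(\phi,\psi_C)^*$ of one cochain representative equals the other.

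The first key step is to observe that $(\phi, \psi_A)^* f$ serves as a lift of the representative $(\phi, \psi_{A''})^* f''$ of $(\phi, \psi_{A''})^*(\alpha'')$. Indeed, commutativity of the diagram (\ref{commutative diagram A of Tate product morphism}) gives $j \circ \psi_A = \psi_{A''} \circ j$, whence $j_*((\phi, \psi_A)^* f) = (\phi, \psi_{A''})^*(j_* f) = (\phi, \psi_{A''})^* f''$; the analogous statement holds for $(\phi, \psi_B)^* g$ via (\ref{commutative diagram B of Tate product morphism}). By Definition \ref{augmented cup product}, the lower composite therefore equals the class of
\[
d_r((\phi, \psi_A)^* f) \cup (\phi, \psi_B)^* g + (-1)^r (\phi, \psi_A)^* f \cup d_s((\phi, \psi_B)^* g).
\]

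The second step rewrites this using compatibility of induced morphisms with coboundary maps (\ref{compatibility of induced morphism with the coboundary maps}), which gives $d_r((\phi, \psi_A)^* f) = (\phi, \psi_A)^*(d_r f)$. Here one must be careful: by Proposition \ref{(a) df is in A' (b)f is in A' up to coboundary}(a) the cochain $d_r f$ takes values in $A_1'$, and commutativity of the left square of (\ref{commutative diagram A of Tate product morphism}) says precisely that $\psi_A$ restricts to $\psi_{A'}$ on $A_1'$; hence $(\phi, \psi_A)^*(d_r f) = (\phi, \psi_{A'})^*(d_r f)$, and likewise $d_s((\phi, \psi_B)^* g) = (\phi, \psi_{B'})^*(d_s g)$. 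With the coboundaries now expressed through $\psi_{A'}$ and $\psi_{B'}$, the functoriality diagrams (\ref{compatibility 1 of Tate product morphism with the cup product}) and (\ref{compatibility 2 of Tate product morphism with the cup product}) apply to each cup product, turning the two summands into $(\phi, \psi_C)^*(d_r f \cup g)$ and $(-1)^r (\phi, \psi_C)^*(f \cup d_s g)$ respectively.

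Adding these and using linearity of $(\phi, \psi_C)^*$ yields the cochain $(\phi, \psi_C)^*(d_r f \cup g + (-1)^r f \cup d_s g)$; passing to cohomology classes gives $(\phi, \psi_C)^*(\alpha'' \acup \beta'')$, which is exactly the upper composite. I expect the main (though modest) obstacle to be the bookkeeping in the second step: one must confirm that the coboundary $d_r((\phi,\psi_A)^* f)$, which a priori only lives in $\C^{r+1}(G_2, A_2)$, genuinely takes values in $A_2'$ and agrees with $(\phi, \psi_{A'})^*(d_r f)$, so that the cup products are formed against the correct pairings $A_2' \times B_2 \to C_2$ and $A_2 \times B_2' \to C_2$ and the functoriality diagrams are legitimately applicable. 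Everything else is a formal consequence of the definitions and the already-established compatibilities.
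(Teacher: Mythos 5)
Your proposal is correct and follows essentially the same route as the paper's proof: both identify $(\phi,\psi_A)^*f$ and $(\phi,\psi_B)^*g$ as lifts of the pushed-forward classes via diagrams (\ref{commutative diagram A of Tate product morphism}) and (\ref{commutative diagram B of Tate product morphism}), then combine the coboundary compatibility (\ref{compatibility of induced morphism with the coboundary maps}) with the cup-product functoriality diagrams (\ref{compatibility 1 of Tate product morphism with the cup product}) and (\ref{compatibility 2 of Tate product morphism with the cup product}), differing only in the direction of the computation. In fact, your explicit justification that $(\phi,\psi_A)^*(d_rf)=(\phi,\psi_{A'})^*(d_rf)$ because $\psi_A$ restricts to $\psi_{A'}$ on $A_1'$ spells out a point the paper uses implicitly when it interchanges these two expressions.
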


\begin{proof}
Take $\alpha''\in \Ho^{r}(G_1,A_1'')$ and $\beta''\in \Ho^{s}(G_1,B_1'')$ with representatives $f\in \C^r(G_1,A_1)$ and $g\in \C^s(G_1,B_1)$, respectively (namely, $\alpha''=[j_*f], \beta''=[j_*g]$). 

Note that
\[
(\phi,\psi_{A''})^*(\alpha'')=(\phi,\psi_{A''})^*([j_*f])= [\psi_{A''}\circ j\circ f \circ \phi].
\]
By the commutativity of diagram (\ref{commutative diagram A of Tate product morphism}), the latter expression is equal to
\[
[j\circ \psi_{A}\circ f \circ \phi]= [j_*(\phi,\psi_{A})^*(f)].
\]
Thus, $(\phi,\psi_{A})^*(f)$ is a representative of $(\phi,\psi_{A''})^*(\alpha'')$. Similarly, $(\phi,\psi_{B})^*(g)$ is a representative of $(\phi,\psi_{B''})^*(\beta'')$.

By the definition of the augmented cup product,
\[
(\phi,\psi_{C})^*(\alpha''\cup_{\aug} \beta'')=
(\phi,\psi_{C})^*[d_rf\cup g+(-1)^rf\cup d_sg].
\]
By the commutativity of diagrams (\ref{compatibility 1 of Tate product morphism with the cup product}) and (\ref{compatibility 2 of Tate product morphism with the cup product}), the latter expression is equal to
\[
[(\phi,\psi_{A'})^*(d_rf) \cup (\phi,\psi_{B})^*(g) +(-1)^r(\phi,\psi_{A})^*(f) \cup (\phi,\psi_{B'})^*(d_sg)],
\]
which, by the commutativity of diagram (\ref{compatibility of induced morphism with the coboundary maps}), is equal to
\[
\begin{split}
    &[d_r((\phi,\psi_{A})^*(f)) \cup (\phi,\psi_{B})^*(g) +(-1)^r(\phi,\psi_{A})^*(f)\cup d_s((\phi,\psi_{B})^*(g))]=\\
    &(\phi,\psi_{A''})^*\alpha''\cup_{\aug} (\phi,\psi_{B''})^* \beta''. \qedhere
 \end{split} 
\] 
\end{proof}

Suppose that we have the following setting: A fixed profinite group $G$, Tate products $(A_1,B_1,C_1), (A_2,B_2,C_2)$ over $G$, and discrete $G$-module morphisms 
\begin{equation}
\label{commutative diagram A of twisted Tate product morphism}
    \begin{tikzcd}
0 \arrow[r] & 
A_1'\arrow[r, "i"]\arrow[d,"\psi_{A'}"] & 
A_1\arrow[r, "j"]\arrow[d,"\psi_A"] &
A_1''\arrow[r]\arrow[d,"\psi_{A''}"] &
0 \\ 
0 \arrow[r] &
A_2'\arrow[r, "i"] & 
A_2\arrow[r, "j"] & 
A_2''\arrow[r] &
0,
    \end{tikzcd}
\end{equation}

\begin{equation}
\label{commutative diagram B of twisted Tate product morphism}
    \begin{tikzcd}
0 \arrow[r] & 
B_1'\arrow[r, "i"] & 
B_1\arrow[r, "j"] &
B_1''\arrow[r] &
0 \\ 
0 \arrow[r] &
B_2'\arrow[r, "i"] \arrow[u,"\psi_{B'}"] & 
B_2\arrow[r, "j"] \arrow[u,"\psi_B"] & 
B_2''\arrow[r] \arrow[u,"\psi_{B''}"] &
0,
    \end{tikzcd}
\end{equation}
\[
    C_1\xrightarrow{\psi_C} C_2,
\]
such that diagrams (\ref{commutative diagram A of twisted Tate product morphism}) and (\ref{commutative diagram B of twisted Tate product morphism}) are exact and commutative.

The following proposition shows the functoriality of the augmented cup product in this setting.

\begin{prop}
    \label{twisted functoriality of acup}
    For the setting as above, if the diagrams
    \[
    \begin{tikzcd}
        A_1'\quad \arrow[d,"\psi_{A'}", xshift=-5ex] \times \quad
        B_1\arrow[r] &
        C_1\arrow[d,"\psi_C"]\\
        A_2' \quad \times
        \quad B_2\arrow[r] \arrow[u,"\psi_B", xshift=5ex] &
        C_2 , 
        \end{tikzcd}
        \quad
        \begin{tikzcd}
        A_1\quad \arrow[d,"\psi_A", xshift=-5ex] \times \quad
        B_1'\arrow[r] &
        C_1\arrow[d,"\psi_C"]\\
        A_2 \quad \times
        \quad B_2' \arrow[u,"\psi_{B'}", xshift=5ex] \arrow[r] &
        C_2 
        \end{tikzcd}
    \]
    are commutative, then the diagram
    \[
    \begin{tikzcd}
    \Ho^{r}(G,A_1'') \arrow[d, "\psi_{A''*}"] &
    \times &
    \Ho^{s}(G,B_1'') \quad \arrow[r, "\acup"] &
    \quad \Ho^{r+s+1}(G,C_1) \arrow[d, "\psi_{C*}"] \\
    \Ho^{r}(G,A_2'') & \times & \Ho^{s}(G,B_2'')  \quad \arrow[u, "\psi_{B''*}"] \arrow[r, "\acup"] & \quad \Ho^{r+s+1}(G,C_2) 
\end{tikzcd}
    \]
    is commutative.
\end{prop}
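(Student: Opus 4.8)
The plan is to carry out the whole argument at the level of cochains, in direct parallel to the proof of Proposition~\ref{functoriality of Acup}, but substituting the twisted functoriality of the ordinary cup product (Corollary~\ref{twisted functoriality of cup product with a fixed group}) for the straight functoriality. Fix $\alpha''\in\Ho^r(G,A_1'')$ and $\beta''\in\Ho^s(G,B_2'')$, and choose representatives $f\in\C^r(G,A_1)$ of $\alpha''$ and $g\in\C^s(G,B_2)$ of $\beta''$, i.e.\ $[j_*f]=\alpha''$ and $[j_*g]=\beta''$. The first thing to establish is which cochains represent the images under the functorial maps. Using the commutativity of the right-hand square of diagram~(\ref{commutative diagram A of twisted Tate product morphism}), i.e.\ $j\circ\psi_A=\psi_{A''}\circ j$, one gets $j_*(\psi_{A*}f)=\psi_{A''*}(j_*f)$, so $\psi_{A*}f$ represents $\psi_{A''*}\alpha''$; symmetrically, since the $B$-maps point upward, $\psi_{B*}g\in\C^s(G,B_1)$ represents $\psi_{B''*}\beta''\in\Ho^s(G,B_1'')$, which is exactly the module into which $\beta''$ is pushed.

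Next I would expand both routes through the target square by Definition~\ref{augmented cup product}. The top-then-down route gives
\[
\psi_{C*}\bigl(\alpha''\acup\psi_{B''*}\beta''\bigr)=\bigl[\psi_{C*}(d_rf\cup\psi_{B*}g)+(-1)^r\psi_{C*}(f\cup d_s(\psi_{B*}g))\bigr],
\]
while the down-then-acup route gives
\[
\psi_{A''*}\alpha''\acup\beta''=\bigl[d_r(\psi_{A*}f)\cup g+(-1)^r\psi_{A*}f\cup d_sg\bigr].
\]
It then suffices to match the two summands on the right-hand sides.

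The heart of the argument is two applications of Corollary~\ref{twisted functoriality of cup product with a fixed group}. For the first summand, note that $d_rf\in\C^{r+1}(G,A_1')$ by Proposition~\ref{(a) df is in A' (b)f is in A' up to coboundary}(a), so I would apply the corollary to the bilinear map $A_1'\times B_1\to C_1$ with the morphisms $\psi_{A'},\psi_B,\psi_C$ (the first twisted bilinear diagram in the hypothesis), obtaining $\psi_{C*}(d_rf\cup\psi_{B*}g)=\psi_{A'*}(d_rf)\cup g$. For the second summand, $d_sg\in\C^{s+1}(G,B_2')$ by the same proposition applied to the $B$-sequence, and applying the corollary to $A_1\times B_1'\to C_1$ with $\psi_A,\psi_{B'},\psi_C$ (the second twisted diagram) yields $\psi_{C*}(f\cup\psi_{B'*}(d_sg))=\psi_{A*}f\cup d_sg$. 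Finally I would reconcile primed and unprimed notation: the commutativity of the left squares of diagrams~(\ref{commutative diagram A of twisted Tate product morphism}) and~(\ref{commutative diagram B of twisted Tate product morphism}), namely $\psi_A\circ i=i\circ\psi_{A'}$ and $\psi_B\circ i=i\circ\psi_{B'}$, combined with the compatibility of induced maps with coboundaries (diagram~(\ref{compatibility of induced morphism with the coboundary maps})), give $\psi_{A'*}(d_rf)=d_r(\psi_{A*}f)$ and $\psi_{B'*}(d_sg)=d_s(\psi_{B*}g)$ under the identifications of $\C(G,A_2')$ and $\C(G,B_1')$ with subgroups of $\C(G,A_2)$ and $\C(G,B_1)$. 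Substituting these two identities makes the expanded sides agree termwise.

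I expect the main obstacle to be organizational rather than mathematical: keeping track of the opposite variance of the $B$-maps so that representatives land in the correct modules, and selecting the right instance of Corollary~\ref{twisted functoriality of cup product with a fixed group} (with the primed modules $A_1'$ in the first term and $B_1'$ in the second) for each summand. Once these instances are pinned down, the remaining steps are exactly the coboundary-compatibility identities already used in Proposition~\ref{functoriality of Acup}, so no new estimates or constructions are needed.
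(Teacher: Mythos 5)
Your proposal is correct and follows essentially the same route as the paper: choose cochain representatives $f,g$, observe via the right squares of diagrams~(\ref{commutative diagram A of twisted Tate product morphism}) and~(\ref{commutative diagram B of twisted Tate product morphism}) that $\psi_{A*}f$ and $\psi_{B*}g$ represent the pushed-forward classes, apply Corollary~\ref{twisted functoriality of cup product with a fixed group} once to $A_1'\times B_1\to C_1$ and once to $A_1\times B_1'\to C_1$, and close the gap with the identities $\psi_{A'*}(d_rf)=d_r(\psi_{A*}f)$ and $\psi_{B'*}(d_sg)=d_s(\psi_{B*}g)$ coming from the left squares and coboundary compatibility. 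The only difference is presentational (you expand both routes and match terms, while the paper rewrites one route into the other), so no comparison is needed.
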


\begin{proof}
By Corollary \ref{twisted functoriality of cup product with a fixed group}, the diagrams
\begin{equation}
    \label{commutative diagram 1 of cup products in Tate product}
    \begin{tikzcd}
        \C^{r}(G,A_1')\quad \arrow[d,"\psi_{A'*}", xshift=-9ex] \times \quad
        \C^{s}(G,B_1)\arrow[r, "\cup"] &
        \C^{r+s}(G,C_1)\arrow[d,"\psi_{C*}"]\\
        \C^{r}(G,A_2') \quad \times
        \quad \C^{s}(G,B_2)\arrow[r, "\cup"] \arrow[u,"\psi_{B*}", xshift=9ex] &
        \C^{r+s}(G,C_2) , 
        \end{tikzcd}
\end{equation}
\begin{equation}
    \label{commutative diagram 2 of cup products in Tate product}
        \begin{tikzcd}
        \C^{r}(G,A_1)\quad \arrow[d,"\psi_{A*}", xshift=-9ex] \times \quad
        \C^{s}(G,B_1')\arrow[r, "\cup"] &
        \C^{r+s}(G,C_1) \arrow[d,"\psi_{C*}"]\\
        \C^{r}(G,A_2) \quad \times
        \quad \C^{s}(G,B_2') \arrow[u,"\psi_{B'*}", xshift=9ex] \arrow[r, "\cup"] &
        \C^{r+s}(G,C_2) ,
        \end{tikzcd}
\end{equation}
are commutative.

Take $\alpha''\in \Ho^{r}(G,A_1'')$ and $\beta''\in \Ho^{s}(G,B_2'')$ with representatives $f\in \C^r(G,A_1)$ and $g\in \C^s(G,B_2)$, respectively (namely, $\alpha''=[j_*f], \beta''=[j_*g]$). 
Note that
\[
\psi_{A''*}(\alpha'')=\psi_{A''*}([j_*f])= [\psi_{A''}\circ j\circ f].
\]
By the commutativity of diagram (\ref{commutative diagram A of Tate product morphism}), the latter expression is equal to
\[
[j\circ \psi_{A}\circ f]= [j_*\psi_{A*}(f)].
\]
Thus, $\psi_{A*}(f)$ is a representative of $\psi_{A''*}(\alpha'')$. Similarly, $\psi_{B*}(g)$ is a representative of $\psi_{B''*}(\beta'')$.

By the definition of the augmented cup product,
\[
\begin{split}
    \psi_{C*}(\alpha''\acup \psi_{B''*}(\beta''))&=
    \psi_{C*}[d_rf\cup \psi_{B*}(g)+(-1)^rf\cup d_s(\psi_{B*}(g))]\\
    &= \psi_{C*}[d_rf\cup \psi_{B*}(g)+(-1)^rf\cup \psi_{B'*}(d_sg)].
\end{split}
\]
By the commutativity of diagrams (\ref{commutative diagram 1 of cup products in Tate product}) and (\ref{commutative diagram 2 of cup products in Tate product}), the latter expression is equal to
\[
\begin{split}
&[\psi_{A'*}(d_rf) \cup g +(-1)^r\psi_{A*}(f) \cup d_sg]=\\
&[d_r(\psi_{A*}(f)) \cup g +(-1)^r\psi_{A*}(f) \cup d_sg]= \psi_{A''*}(\alpha'')\acup \beta''. \qedhere
\end{split}
\]
\end{proof}

\medskip
\medskip

\subsection{Compatibility with the connecting homomorphism}
\quad\\
Recall that a short exact sequence of discrete $G$-modules  
\[
0 \xrightarrow{} A'\xrightarrow{i} A\xrightarrow{j} A''\xrightarrow{} 0 ,
\]
induces a long exact sequence of cohomology groups
\[
\cdots \xrightarrow{} \Ho^r(G,A') \xrightarrow{} \Ho^r(G,A)\xrightarrow{} \Ho^r(G,A'')\xrightarrow{\delta} \Ho^{r+1}(G,A')\xrightarrow{} \Ho^{r+1}(G,A)\xrightarrow{} \cdots \]
The connecting homomorphism $\delta$ is defined as follows: Given $\alpha''\in \Ho^r(G,A'')$, take $f''\in \Z^r(G,A'')$ such that $\alpha''=[f'']$. Choose $f\in \C^r(G,A)$ such that $j_*f=f''$, and set
\[
\delta(\alpha'')=[d_rf]\in \Ho^{r+1}(G,A').
\]
\cite{NSW}*{Theorem 1.3.2} shows that $\delta$ is well defined.

\begin{prop}
\label{compatibility with connecting homomorphism}

In the following diagram:
\begin{center}
        \begin{tikzcd}
        \Ho^r(G,A)\arrow[d, "j_*"] &
        \times &
        \Ho^{s+1}(G,B') \quad\quad \arrow[r,"\cup"] &
        \quad\quad\Ho^{r+s+1}(G,C)\arrow[d,equal]\\
        \Ho^r(G,A'')\arrow[d,"\delta"] &
        \times &
        \Ho^s(G,B'') \quad\quad \arrow[u,"\delta"]\arrow[r,"\acup"] &
        \quad\quad\Ho^{r+s+1}(G,C)\arrow[d,equal]\\
        \Ho^{r+1}(G,A') &
        \times &
        \Ho^s(G,B) \quad\quad \arrow[u, "j_*"]\arrow[r,"\cup"] &
        \quad\quad\Ho^{r+s+1}(G,C) ,
        \end{tikzcd}
\end{center}
    (1) \quad The upper square is commutative of character $(-1)^r$.\\
    (2) \quad The lower square is commutative. 

\end{prop}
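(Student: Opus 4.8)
The plan is to prove both squares by passing to explicit cochain representatives and applying the defining formula of the augmented cup product (Definition \ref{augmented cup product}). The guiding observation is that in each square one of the two summands $d_rf\cup g$ and $(-1)^r f\cup d_sg$ in that formula drops out, because one of the chosen representatives is an honest cocycle, and the surviving summand coincides, up to the asserted sign, with the composite that routes through the connecting homomorphism $\delta$. Throughout I would use the convention recorded after Example \ref{acup 0X0->1}: a cochain $f\in\C^r(G,A)$ \emph{represents} a class $\alpha''\in\Ho^r(G,A'')$ when $\alpha''=[j_*f]$.

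For part (1) I would take $\alpha=[f]$ with $f\in\Z^r(G,A)$, and $\beta''=[g'']$ together with a lift $g\in\C^s(G,B)$, $j_*g=g''$. Then $f$ represents $j_*\alpha=[j_*f]$ and $g$ represents $\beta''$, so by Definition \ref{augmented cup product},
\[
(j_*\alpha)\acup\beta''=[\,d_rf\cup g+(-1)^r f\cup d_sg\,].
\]
Since $f$ is a cocycle, $d_rf=0$, and the class reduces to $(-1)^r[f\cup d_sg]$. On the other hand $\delta\beta''=[d_sg]$ by the definition of $\delta$, where $d_sg\in\C^{s+1}(G,B')$ by Proposition \ref{(a) df is in A' (b)f is in A' up to coboundary}(a), so the cup product $\Ho^r(G,A)\times\Ho^{s+1}(G,B')\to\Ho^{r+s+1}(G,C)$ of Remark \ref{Remark about the existence of the cup products A'XB, AXB'} gives $\alpha\cup\delta\beta''=[f\cup d_sg]$. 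Comparing the two yields $(j_*\alpha)\acup\beta''=(-1)^r\,\alpha\cup\delta\beta''$, which is exactly commutativity of character $(-1)^r$.

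For part (2) the roles of the two factors are swapped. I would take $\alpha''=[f'']$ with a lift $f\in\C^r(G,A)$, $j_*f=f''$, and $\beta=[g]$ with $g\in\Z^s(G,B)$, so that $j_*\beta=[j_*g]$ is represented by the cocycle $g$. Definition \ref{augmented cup product} gives
\[
\alpha''\acup(j_*\beta)=[\,d_rf\cup g+(-1)^r f\cup d_sg\,],
\]
and now $d_sg=0$ because $g$ is a cocycle, leaving $[d_rf\cup g]$. Since $\delta\alpha''=[d_rf]$ with $d_rf\in\C^{r+1}(G,A')$ (again Proposition \ref{(a) df is in A' (b)f is in A' up to coboundary}(a)), the cup product $\Ho^{r+1}(G,A')\times\Ho^s(G,B)\to\Ho^{r+s+1}(G,C)$ gives $\delta\alpha''\cup\beta=[d_rf\cup g]$. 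Hence $\alpha''\acup(j_*\beta)=\delta\alpha''\cup\beta$, and the lower square commutes on the nose.

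No step is genuinely hard; the only points demanding care are the sign bookkeeping of $(-1)^r$ in part (1) and checking that each surviving cup product uses the correct one of the two bilinear pairings $A'\times B\to C$ and $A\times B'\to C$. These match automatically: in part (1) the surviving term pairs the $A$-valued cochain $f$ with the $B'$-valued cochain $d_sg$, using $A\times B'\to C$, while in part (2) it pairs the $A'$-valued cochain $d_rf$ with the $B$-valued cochain $g$, using $A'\times B\to C$. These are precisely the two pairings packaged into the Tate product, and precisely the ones appearing in Remark \ref{Remark about the existence of the cup products A'XB, AXB'}, so every cup product written above is legitimate and the argument closes.
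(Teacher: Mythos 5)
Your proposal is correct and follows essentially the same route as the paper's own proof: choose a genuine cocycle representative on the side mapped by $j_*$, apply Definition \ref{augmented cup product}, observe that one summand vanishes, and identify the surviving term with the cup product against $\delta$ of the other class. The only difference is cosmetic — you additionally spell out which of the two Tate-product pairings each surviving cup product uses, which the paper leaves implicit via Remark \ref{Remark about the existence of the cup products A'XB, AXB'}.
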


\begin{proof}
    (1) \quad Take $\alpha\in\Ho^r(G,A), \beta''\in \Ho^s(G,B'')$.  We show that 
    \[
    j_*(\alpha)\acup\beta''=(-1)^r\alpha\cup \delta(\beta'').
    \]
    Take $f\in Z^r(G,A)$ such that $\alpha=[f]$, and let $g\in \C^s(G,B)$ be a representative with $[j_*g]=\beta''$. Since $d_rf=0$, we have:
    \[
    \begin{split}
    j_*(\alpha)\acup\beta''&= [j_*f]\acup[j_*g]=[d_rf\cup g + (-1)^rf\cup d_sg]\\
    &=[(-1)^rf\cup d_sg]=(-1)^r[f]\cup [d_sg] =(-1)^r\alpha\cup \delta(\beta'').     
    \end{split}
    \]
    \medskip
    (2) \quad Take $\alpha''\in\Ho^r(G,A''), \beta\in \Ho^s(G,B)$. We show that 
    \[
    \alpha''\acup j_*(\beta)=\delta(\alpha'')\cup \beta.
    \]
    Take $g\in Z^s(G,B)$ such that $\beta=[g]$, and let $f\in \C^r(G,A)$ be a representative with $[j_*f]=\alpha''$. Since $d_sg=0$, we have:
    \[
    \begin{split}
         \alpha''\acup j_*(\beta)&= [j_*f]\acup[j_*g]=[d_rf\cup g + (-1)^rf\cup d_sg]\\
         &=[d_rf\cup g]=[d_rf]\cup [g]= \delta(\alpha'')\cup \beta. \qedhere
    \end{split}
    \]
\end{proof}

\medskip
\medskip

\subsection{Induced pairings}
\quad\\
Suppose that the short exact sequence $0\xrightarrow{} B'\xrightarrow{} B\xrightarrow{} B''\xrightarrow{}0$, from the Tate product $(A,B,C)$, can be extended to an exact and commutative diagram of discrete $G$-modules
\begin{equation}
    \label{extendable}
        \begin{tikzcd}
        &
        0 \arrow[d] & \\
        &
        B' \arrow[d]  &  \\
        0 \arrow[r] &
        B \arrow[r] \arrow[d] &
        D \arrow[r] \arrow[d] &
        E \arrow[r] \arrow[d, equal] &
        0 \\
        0 \arrow[r] &
        B'' \arrow[r, "\phi"] \arrow[d] &
        D'' \arrow[r, "\psi"] \arrow[d] &
        E \arrow[r] &
        0 \\
        &
        0 &
        0 & & .
        \end{tikzcd}
\end{equation}
Then we have, for $s\geq 1$, the exact sequences
\begin{equation}
    \label{long exact sequence from ''}
    \Ho^{s-1}(E)\xrightarrow{\delta} \Ho^{s}(B'')\xrightarrow{\phi_*}\Ho^{s}(D'')\xrightarrow{\psi_*}\Ho^{s}(E) ,
\end{equation}
\begin{equation}
    \label{long exact sequence from null}
    \Ho^{s-1}(E)\xrightarrow{\delta} \Ho^{s}(B)\xrightarrow{}\Ho^{s}(D)\xrightarrow{}\Ho^{s}(E) ,
\end{equation}
\begin{equation}
    \label{long exact sequence from B}
    \Ho^{s-1}(B'')\xrightarrow{\delta} \Ho^{s}(B')\xrightarrow{}\Ho^{s}(B)\xrightarrow{}\Ho^{s}(B'') 
\end{equation} 
(in these statements we omit the references to $G$). We use the compatibility of the augmented cup product with the connecting homomorphism (Proposition \ref{compatibility with connecting homomorphism}), to show that in this case the augmented cup product induces an additional pairing. This will be useful for the geometrical applications is Section \ref{Applications}.

\begin{thm}
\label{How to induce rho}
Suppose that we have a Tate product $(A,B,C)$ and a commutative diagram as in (\ref{extendable}). Then: \\
(1)  \quad  The restricted pairing
 \[
\acup: \Img (\Ho^r(A)\xrightarrow{j_*} \Ho^r(A'')) \times \Ker(\Ho^{s}(B'')\xrightarrow{\phi_*}\Ho^{s}(D'')) \to \Ho^{r+s+1}(C)
\]
is trivial. \\ 
(2) \quad The augmented cup product $\acup$ induces a bilinear map:
\[
\Img (\Ho^r(A)\xrightarrow{j_*} \Ho^r(A'')) \times \Img(\Ho^{s}(B'')\xrightarrow{\phi_*}\Ho^{s}(D'')) \to \Ho^{r+s+1}(C).
\]
\end{thm}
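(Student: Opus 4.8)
The plan is to deduce part (1) from the compatibility of the augmented cup product with the connecting homomorphism (Proposition \ref{compatibility with connecting homomorphism}), and then to obtain part (2) from part (1) by a routine passage to the quotient. I regard part (1) as the essential content: once it is in hand, part (2) is purely formal.

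For part (1), I would first write a class in $\Img(\Ho^r(A)\xrightarrow{j_*}\Ho^r(A''))$ as $j_*(\tilde\alpha)$ with $\tilde\alpha\in\Ho^r(A)$. For $\beta''\in\Ker(\Ho^s(B'')\xrightarrow{\phi_*}\Ho^s(D''))$, Proposition \ref{compatibility with connecting homomorphism}(1) gives
\[
j_*(\tilde\alpha)\acup\beta'' = (-1)^r\,\tilde\alpha\cup\delta(\beta''),
\]
where $\delta\colon\Ho^s(B'')\to\Ho^{s+1}(B')$ is the connecting homomorphism of the Tate sequence $0\to B'\to B\xrightarrow{j}B''\to 0$ and the cup product on the right is the one induced by $A\times B'\to C$. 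It therefore suffices to prove that $\delta(\beta'')=0$.

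To that end I would view the Tate sequence and the derived column
\[
0\to B'\to D\to D''\to 0
\]
of diagram (\ref{extendable}) as a morphism of short exact sequences, with vertical maps $\id_{B'}$, the inclusion $B\hookrightarrow D$, and $\phi$; the only nontrivial compatibility, $\phi\circ j=(D\to D'')\circ(B\hookrightarrow D)$, is exactly the commutativity of the square with vertices $B,D,B'',D''$ in (\ref{extendable}), while exactness of the derived column follows from the snake lemma applied to the lower two rows of (\ref{extendable}), whose right-hand vertical map is $\id_E$. Naturality of the connecting homomorphism then makes the square
\[
\begin{tikzcd}
\Ho^s(B'') \arrow[r, "\delta"] \arrow[d, "\phi_*"] & \Ho^{s+1}(B') \arrow[d, equal] \\
\Ho^s(D'') \arrow[r, "\delta'"] & \Ho^{s+1}(B')
\end{tikzcd}
\]
commute, where $\delta'$ is the connecting homomorphism of the derived column. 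Hence $\delta=\delta'\circ\phi_*$, and since $\beta''\in\Ker\phi_*$ we get $\delta(\beta'')=\delta'(\phi_*\beta'')=0$, whence $j_*(\tilde\alpha)\acup\beta''=0$.

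For part (2), restricting $\acup$ to $\Img(j_*)$ in the first variable yields a bilinear map $\Img(j_*)\times\Ho^s(B'')\to\Ho^{r+s+1}(C)$ that, by part (1), annihilates $\Img(j_*)\times\Ker(\phi_*)$. Since $\Img(\phi_*)\cong\Ho^s(B'')/\Ker(\phi_*)$, for $\alpha\in\Img(j_*)$ and $\eta\in\Img(\phi_*)$ one chooses any $\beta''$ with $\phi_*(\beta'')=\eta$ and declares the value of the induced pairing to be $\alpha\acup\beta''$; independence of the choice of $\beta''$ is precisely the vanishing in part (1), and bilinearity is inherited from $\acup$. The step I expect to require the most care is the bookkeeping of the several connecting homomorphisms, ensuring that the $\delta$ produced by Proposition \ref{compatibility with connecting homomorphism}(1) and the $\delta$ appearing in the naturality square are the same map attached to the Tate sequence; the underlying homological input (naturality of $\delta$ and the snake lemma) is entirely standard.
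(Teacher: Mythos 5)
Your proof is correct, and it shares its skeleton with the paper's: both begin by invoking Proposition \ref{compatibility with connecting homomorphism}(1) to rewrite $j_*(\tilde\alpha)\acup\beta''$ as $(-1)^r\,\tilde\alpha\cup\delta(\beta'')$, reducing part (1) to the vanishing $\delta(\beta'')=0$ for $\beta''\in\Ker\phi_*$, and both treat part (2) as a formal consequence. The two arguments diverge in how that vanishing is proved. The paper works horizontally, with the rows of diagram (\ref{extendable}): exactness of (\ref{long exact sequence from ''}) lifts $\beta''$ to some $\gamma\in\Ho^{s-1}(E)$ with $\delta(\gamma)=\beta''$, and then functoriality of the connecting homomorphism applied to the morphism from the middle row to the bottom row, combined with exactness of $\Ho^{s}(B)\to\Ho^{s}(B'')\xrightarrow{\delta}\Ho^{s+1}(B')$, shows that the composite $\Ho^{s-1}(E)\xrightarrow{\delta}\Ho^{s}(B'')\xrightarrow{\delta}\Ho^{s+1}(B')$ is zero, whence $\delta(\beta'')=\delta(\delta(\gamma))=0$. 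You work vertically instead: the snake lemma applied to the lower two rows of (\ref{extendable}) produces the exact column $0\to B'\to D\to D''\to 0$, and naturality of $\delta$ for the morphism $(\id_{B'},\, B\hookrightarrow D,\, \phi)$ from the Tate sequence to this column yields the factorization $\delta=\delta'\circ\phi_*$, which annihilates $\Ker\phi_*$ outright. Your route buys a slightly sharper statement ($\delta$ literally factors through $\phi_*$) and needs no choice of preimage $\gamma$ nor any cohomology of $E$ (the module $E$ enters only through the snake lemma); the paper's route avoids introducing the auxiliary short exact sequence and runs entirely on long exact sequences it has already displayed, at the cost of the extra lifting step. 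Both rest on the same standard input, functoriality of the connecting homomorphism \cite{NSW}*{Proposition 1.3.3}, and your treatment of part (2) --- choosing a $\phi_*$-preimage and citing part (1) for well-definedness --- is exactly the paper's one-line argument spelled out.
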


\begin{proof}
(1) \quad Take $\alpha''\in \Img(j_*)\subseteq \Ho^{r}(A'')$ and $\alpha\in \Ho^r(A)$ such that $j_*(\alpha)=\alpha''$. Take $\beta''\in \Ker(\phi_*)\subseteq \Ho^{s}(B'')$. By the exactness of $(\ref{long exact sequence from ''})$, there exists $\gamma\in\Ho^{s-1}(E)$ such that $\delta(\gamma)=\beta''$. By Proposition \ref{compatibility with connecting homomorphism} (1), 
\begin{equation}
\label{compute by compatibility of connecting hom.}
\alpha''\acup\beta''=(-1)^r\alpha\cup\delta(\beta'')=(-1)^r\alpha\cup\delta(\delta(\gamma)).    
\end{equation}
By diagram (\ref{extendable}) and the functoriality of the connecting homomorphism \cite{NSW}*{Proposition 1.3.3}, we have the commutative diagram with an exact column:
\begin{center}
    \begin{tikzcd} 
    \Ho^{s-1}(E) \arrow[d, equal] \arrow[r, "\delta"] &
    \Ho^{s}(B) \arrow[d] \\
    \Ho^{s-1}(E) \arrow[r, "\delta"] &
    \Ho^{s}(B'') \arrow[d, "\delta"] \\
    & \Ho^{s+1}(B').  
    \end{tikzcd}
\end{center}
Hence, the composition
\[
\Ho^{s-1}(E)\xrightarrow{\delta} \Ho^{s}(B'')\xrightarrow{\delta} \Ho^{s+1}(B')
\]
is zero.
Specifically,  $\delta(\delta(\gamma))=0$, and we get (\ref{compute by compatibility of connecting hom.})  
\[
\alpha''\acup\beta''=0.
\]
(2) \quad This follows from (1) and the bilinearity of the augmented cup product.
\end{proof}

\begin{cor}
\label{corollary with E=Z}
In the above setting, in the case that $E=\mathbb{Z}$ (with trivial $G$-action), the augmented cup product $\acup$ induces a bilinear map:
\[
\Img (\Ho^r(A)\xrightarrow{j_*} \Ho^r(A'')) \times \Ho^{1}(D'') \to \Ho^{r+2}(C).
\]
\end{cor}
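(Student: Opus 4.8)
The plan is to deduce the corollary directly from Theorem~\ref{How to induce rho} by specializing $s=1$, and then to check that in this special case the second image occurring there fills up all of $\Ho^1(D'')$.

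First I would set $s=1$ in Theorem~\ref{How to induce rho}(2). Since $r+s+1=r+2$, this already produces a bilinear map
\[
\Img\bigl(\Ho^r(A)\xrightarrow{j_*}\Ho^r(A'')\bigr)\times \Img\bigl(\Ho^1(B'')\xrightarrow{\phi_*}\Ho^1(D'')\bigr)\to \Ho^{r+2}(C),
\]
whose first factor is already the one appearing in the statement. It therefore remains only to identify the second factor with the full group $\Ho^1(D'')$, i.e.\ to show that $\phi_*\colon \Ho^1(B'')\to \Ho^1(D'')$ is surjective when $E=\mathbb{Z}$.

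For this I would invoke the exact sequence (\ref{long exact sequence from ''}) with $s=1$, namely
\[
\Ho^1(B'')\xrightarrow{\phi_*}\Ho^1(D'')\xrightarrow{\psi_*}\Ho^1(E)=\Ho^1(G,\mathbb{Z}).
\]
By exactness $\Img(\phi_*)=\Ker(\psi_*)$, so surjectivity of $\phi_*$ is equivalent to $\psi_*$ being the zero map, and this holds as soon as $\Ho^1(G,\mathbb{Z})=0$.

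The one substantive point — and essentially the only possible obstacle — is this vanishing. I would argue that, as the $G$-action on $\mathbb{Z}$ is trivial, $\Ho^1(G,\mathbb{Z})$ is the group $\Hom_{\mathrm{cont}}(G,\mathbb{Z})$ of continuous homomorphisms. Any such homomorphism has open kernel (since $\{0\}$ is open in the discrete group $\mathbb{Z}$), hence a kernel of finite index, so its image is a finite subgroup of $\mathbb{Z}$ and must be trivial. Thus $\Ho^1(G,\mathbb{Z})=0$, whence $\psi_*=0$ and $\Img(\phi_*)=\Ho^1(D'')$. Substituting this equality into the pairing furnished by Theorem~\ref{How to induce rho}(2) yields exactly the asserted bilinear map, which inherits bilinearity from that theorem.
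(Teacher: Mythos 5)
Your proof is correct and follows essentially the same route as the paper: specialize Theorem~\ref{How to induce rho}(2) to $s=1$, then use the exactness of (\ref{long exact sequence from ''}) together with the vanishing of $\Ho^1(G,\mathbb{Z})$ to conclude that $\phi_*\colon \Ho^1(B'')\to\Ho^1(D'')$ is surjective, so the second factor is all of $\Ho^1(D'')$. Your argument for $\Ho^1(G,\mathbb{Z})=0$ (open kernel of a continuous homomorphism, hence finite index by compactness, hence finite image in the torsion-free group $\mathbb{Z}$) is just a more detailed spelling-out of the paper's one-line compactness argument.
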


\begin{proof}
The group $\Ho^{1}(\mathbb{Z})$ consists of the continuous homomorphisms from $G$ to $\mathbb{Z}$. Since $G$ is compact and $\mathbb{Z}$ is discrete and torsion-free, there are no such homomorphisms except the trivial homomorphism. Hence, 
$\Ho^{1}(\mathbb{Z})=0$. Thus, by the exactness of $(\ref{long exact sequence from ''})$, $\Ho^{1}(B'')\xrightarrow{\phi_*}\Ho^{1}(D'')$ is surjective, and by Theorem \ref{How to induce rho} (2), we get the requested bilinear map.    
\end{proof}

\medskip
\medskip

\subsection{Compatibility with the restriction and the corestriction maps}
Using the discussion in \cite{Serre}*{Chapter I, \S 2.5}, we give the following interpretation of the restriction and the corestriction maps.

Let $H$ be a closed subgroup of $G$, and let $C$ be a discrete $H$-module. The (co-)induced module $\Ind(C)$ is the group of continuous maps $c^*$ from $G$ to $C$ such that $c^*(hx)=h\cdot c^*(x)$ for every $h\in H$, and $x\in G$. The $G$-module structure of $\Ind(C)$ is given for $c^*\in \Ind(C)$ and $g,x\in G$ by
\[
(gc^*)(x)=c^*(xg).
\]
The functor $\Ind$ is an exact functor from the category of $H$-modules to the category of $G$-modules \cite{Serre}*{Chapter I, \S 2.5}.

The discrete $H$-module morphism
\[
\begin{split}
    e = e_C: \Ind(C)&\to C \\
    c^*&\mapsto c^*(1),
\end{split}
\]
together with the inclusion map $\incl: H\hookrightarrow G$, induce the cohomology group homomorphism
\[
e^*=(\incl,e)^*: \Ho^{r}(G,\Ind(C))\to \Ho^{r}(H,C).
\]
By Shapiro's lemma \cite{Serre}*{Ch. I, \S 2.5, Proposition 10}, $e^*$ is an isomorphism.

Given a bilinear map of discrete $H$-modules $C_1\times C_2 \to C_3$, we have a natural bilinear map of discrete $G$-modules
    \[
    \Ind(C_1)\times \Ind(C_2) \to \Ind(C_3),
    \]
defined by
\[
(c_1^*\times c_2^*)(x)=c_1^*(x)\times c_2^*(x).
\]

\begin{lem}
\label{e preserves bilinear maps}
The following diagram commutes:
\[
\begin{tikzcd} 
    \Ind(C_1)  \quad \times \quad \Ind(C_2) \arrow[d, "e_{C_1}", xshift=-10ex] \arrow[d, "e_{C_2}", xshift=10ex]  \arrow[r] &
    \Ind(C_3) \arrow[d, "e_{C_3}"] \\
    C_1 \qquad \quad
    \times \quad \qquad C_2 \arrow[r] &
     C_3 .
    \end{tikzcd}
\]
\end{lem}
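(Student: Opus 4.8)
The plan is to verify commutativity by a direct diagram chase: I would take an arbitrary element of $\Ind(C_1)\times \Ind(C_2)$, push it along the two paths of the square, and check that the results agree. Since every map in sight is given by an explicit formula — $e_{C_i}$ is evaluation at $1\in G$, the top map is the pointwise product of induced cochains, and the bottom map is the original bilinear pairing — no indirect or abstract argument is needed; the well-definedness of the induced pairing $\Ind(C_1)\times \Ind(C_2)\to \Ind(C_3)$ has already been recorded, so I may use it freely.

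Concretely, I would fix a pair $(c_1^*, c_2^*)\in \Ind(C_1)\times \Ind(C_2)$. Along the upper-right path I first form the product $c_1^*\times c_2^*\in \Ind(C_3)$, which by its definition satisfies $(c_1^*\times c_2^*)(x)=c_1^*(x)\times c_2^*(x)$ for all $x\in G$; applying $e_{C_3}$ then gives
\[
e_{C_3}(c_1^*\times c_2^*)=(c_1^*\times c_2^*)(1)=c_1^*(1)\times c_2^*(1).
\]
Along the lower-left path I first apply $e_{C_1}$ and $e_{C_2}$ to obtain the pair $(c_1^*(1), c_2^*(1))\in C_1\times C_2$, and then apply the bottom bilinear map to get $c_1^*(1)\times c_2^*(1)$. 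The two outputs coincide, which is exactly the assertion that the square commutes.

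The only point requiring care is bookkeeping: one must keep track of which ambient pairing each symbol $\times$ refers to — the pointwise product defining the $G$-bilinear map on induced modules versus the original pairing $C_1\times C_2\to C_3$ — and note that the definition of the former is precisely ``evaluate both arguments at the same point and multiply.'' I do not expect any genuine obstacle here. The content of the lemma is simply that evaluation at the identity $1\in G$ is compatible with the bilinear structures, and the entire verification collapses to evaluating both composites at $1$.
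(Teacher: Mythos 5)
Your proposal is correct and is essentially identical to the paper's proof: both verify commutativity by evaluating an arbitrary pair $(c_1^*,c_2^*)$ along the two composites and observing that each yields $c_1^*(1)\times c_2^*(1)$, using the pointwise definition of the induced pairing and the definition $e_C(c^*)=c^*(1)$. There is nothing to add.
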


\begin{proof}
Take $c_1^*\in \Ind(C_1)$ and $c_2^*\in \Ind(C_2)$. We have
\[
e_{C_3}(c_1^*\times c_2^*) = (c_1^*\times c_2^*)(1) = c_1^*(1) \times c_1^*(1) = e_{C_1}(c_1^*) \times e_{C_2}(c_2^*). \qedhere
\]
\end{proof}

There is a discrete $H$-module morphism
\[
\begin{split}
    i: C&\to \Ind(C) \\
    c&\mapsto i(c): G\to C\\
    &\qquad \qquad x\mapsto x\cdot c.
\end{split}
\]
It induces the cohomology group homomorphism
\[
i_*: \Ho^{r}(G,C)\to \Ho^{r}(G,\Ind(C)).
\]
The map $i_*$ coincides with the restriction map $\res: \Ho^{r}(G,C)\to \Ho^{r}(H,C)$, in the sense that the diagram
\begin{equation}
\label{i is res}
    \begin{tikzcd} 
    \Ho^{r}(G,C) \arrow[r, "i_*"'] \arrow[rr, "\res", bend left=15] &
    \Ho^{r}(G,\Ind(C)) \arrow[r, "\sim", "e^*"'] &
    \Ho^{r}(H,C)
    \end{tikzcd}
\end{equation}
is commutative.

If $H$ is open in $G$ (hence of finite index $n=[G:H]$), then there is a discrete $H$-module morphism
\[
\begin{split}
    \pi: \Ind(C)&\to C \\
    c^*&\mapsto \sum_{x\in G/H}x\cdot c^*(x^{-1}),
\end{split}
\]
where $x$ varies through representatives of the cosets of $H$ in $G$. It induces the cohomology group homomorphism
\[
\pi_*: \Ho^{r}(G,\Ind(C))\to \Ho^{r}(G,C).
\]
The map $\pi_*$ coincides with the corestriction map $\cores: \Ho^{r}(H,C)\to \Ho^{r}(G,C)$, in the sense that the diagram
\begin{equation}
\label{pi is cores}
    \begin{tikzcd} 
    \Ho^{r}(H,C) \arrow[r, "\sim", "(e^*)^{-1}"'] \arrow[rr, "\cores", bend left=15] &
    \Ho^{r}(G,\Ind(C)) \arrow[r, "\pi_*"'] &
    \Ho^{r}(G,C)
    \end{tikzcd}
\end{equation}
is commutative.

\begin{lem}
\label{pi almost preserves bilinear maps}
 Given a bilinear map of discrete $H$-modules $C_1\times C_2 \to C_3$, the following diagram commutes:
\[
\begin{tikzcd} 
    \Ind(C_1)  \quad \times \quad \Ind(C_2) \arrow[d, "\pi_{C_1}", xshift=-10ex] \arrow[r] &
    \Ind(C_3) \arrow[d, "\pi_{C_3}"] \\
    C_1 \qquad \quad
    \times \quad \qquad C_2 \arrow[u, "i_{C_2}", xshift=10ex] \arrow[r] &
     C_3 .
    \end{tikzcd}
\]
\end{lem}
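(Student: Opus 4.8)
The plan is to prove the stated commutativity by a direct computation at the level of elements, chasing a pair $(c_1^*, c_2) \in \Ind(C_1) \times C_2$ around the two legs of the twisted square. Reading the diagram in the same way as in Proposition~\ref{twisted functoriality of the cup product} (the push-down maps here are $\pi_{C_1}, \pi_{C_3}$, playing the role of the $(\phi,\psi)^*$'s, and the lift-up map is $i_{C_2}$, playing the role of $(\chi,\xi)^*$), the assertion reduces to the single identity
\[
\pi_{C_3}\big(c_1^* \times i_{C_2}(c_2)\big) = \pi_{C_1}(c_1^*) \times c_2 ,
\]
where on the left one lifts $c_2$ to $\Ind(C_2)$ via $i_{C_2}$, pairs pointwise with $c_1^*$, and pushes down by $\pi_{C_3}$, while on the right one pushes $c_1^*$ down by $\pi_{C_1}$ and pairs with $c_2$.

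First I would unwind the two inner ingredients. By definition $i_{C_2}(c_2)$ is the map $y \mapsto y \cdot c_2$, so the pointwise definition of the induced pairing gives $(c_1^* \times i_{C_2}(c_2))(y) = c_1^*(y) \times (y \cdot c_2)$ for every $y \in G$. Substituting $y = x^{-1}$ and applying the definition of $\pi_{C_3}$, I obtain
\[
\pi_{C_3}\big(c_1^* \times i_{C_2}(c_2)\big) = \sum_{x \in G/H} x \cdot \big[\, c_1^*(x^{-1}) \times (x^{-1} \cdot c_2)\,\big],
\]
the sum running over a fixed set of representatives of the cosets of $H$ in $G$.

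The heart of the argument is the simplification of each summand. Invoking the compatibility of the pairing with the $G$-action, $\sigma \cdot (a \times b) = \sigma a \times \sigma b$, applied with $\sigma = x$, each term becomes $\big(x \cdot c_1^*(x^{-1})\big) \times \big((x x^{-1}) \cdot c_2\big) = \big(x \cdot c_1^*(x^{-1})\big) \times c_2$. Additivity of the pairing in its first variable then lets me pull the sum out of the first slot, yielding $\big(\sum_{x} x \cdot c_1^*(x^{-1})\big) \times c_2 = \pi_{C_1}(c_1^*) \times c_2$, which is exactly the right-hand side.

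I expect the only genuinely delicate point to be this middle step. The coset representatives $x$ range over $G$ and in general do \emph{not} lie in $H$, so the outer factor $x\cdot$ cannot be absorbed using only $H$-equivariance; it is essential that the pairing is compatible with the full $G$-action (equivalently, that $C_1, C_2, C_3$ carry compatible $G$-module structures, which is in any case already implicit in the very definitions of $\pi_{C_i}$ and $i_{C_2}$). Once that equivariance is in hand the computation is forced, and I would only add the remark that the outcome does not depend on the chosen coset representatives, this being inherited from the well-definedness of $\pi$ recorded before the lemma.
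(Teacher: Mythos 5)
Your proof is correct and takes essentially the same route as the paper's: both reduce to the single identity $\pi_{C_3}\bigl(c_1^* \times i_{C_2}(c_2)\bigr) = \pi_{C_1}(c_1^*) \times c_2$, expand via the definitions of $i_{C_2}$, the pointwise pairing on induced modules, and $\pi_{C_3}$, then use equivariance of the pairing to turn each summand into $\bigl(x\cdot c_1^*(x^{-1})\bigr)\times c_2$ and bilinearity to pull out the sum. Your closing observation that the coset representatives lie in $G$ rather than $H$, so that the computation genuinely needs the pairing to be compatible with the full $G$-action (as is already implicit in the definitions of $\pi$ and $i$), is a subtlety the paper passes over in silence and is worth making explicit.
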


\begin{proof}
Take $c_1^*\in \Ind(C_1)$ and $c_2\in C_2$. We have
\[
\pi_{C_3}(c_1^*\times i_{c_2}(c_2)) = \sum_{x\in G/H}x\cdot(c_1^*\times i_{c_2}(c_2))(x^{-1})
= \sum_{x\in G/H}(x\cdot c_1^*(x^{-1})\times x\cdot i_{c_2}(c_2)(x^{-1}))
\]
\[
= \sum_{x\in G/H}(x\cdot c_1^*(x^{-1})\times x\cdot x^{-1}\cdot c_2) 
= \sum_{x\in G/H}x\cdot c_1^*(x^{-1})\times c_2= \pi_{C_1}(c_1^*)\times c_2. \qedhere
\]
\end{proof}

Consider a Tate product of discrete $H$-modules $(A,B,C)$. Since $\Ind$ is an exact functor, we have short exact sequences 
    \[
    \begin{split}
        0\xrightarrow{}\Ind(A')\xrightarrow{}\Ind(A)\xrightarrow{}\Ind(A'')\xrightarrow{} 0 ,\\
        0\xrightarrow{}\Ind(B')\xrightarrow{}\Ind(B)\xrightarrow{}\Ind(B'')\xrightarrow{} 0.
    \end{split}
    \]
Further, we have natural pairings
    \[
    \Ind(A')\times \Ind(B) \to \Ind(C), \qquad
    \Ind(A)\times \Ind(B') \to \Ind(C),
    \]
which coincide on $\Ind(A')\times \Ind(B')$, as described above. Thus, we get an induced Tate product of discrete $G$-modules $(\Ind(A),\Ind(B),\Ind(C))$, which gives rise to an augmented cup product
\[
\acup: \Ho^{r}(G,\Ind(A''))\times \Ho^{s}(G,\Ind(B''))\to \Ho^{r+s+1}(G,\Ind(C)).
\]

\begin{thm}
For an open subgroup $H\leq G$, the diagram
\begin{center}
        \begin{tikzcd}
        \Ho^r(H,A'')\arrow[d,"\cores"] &
        \times &
        \Ho^s(H,B'') \quad\quad \arrow[r,"\acup"] &
        \quad\quad\Ho^{r+s+1}(H,C)\arrow[d,"\cores"]\\
        \Ho^r(G,A'') &
        \times &
        \Ho^s(G,B'') \arrow[u,"\res"] \quad\quad \arrow[r,"\acup"] &
        \quad\quad\Ho^{r+s+1}(G,C) 
        \end{tikzcd}
\end{center}
is commutative.
\end{thm}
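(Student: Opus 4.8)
The commutativity of the displayed diagram is exactly the projection-type identity
\[
\cores(\alpha''\acup\res\beta'')=\cores(\alpha'')\acup\beta''
\qquad(\alpha''\in\Ho^r(H,A''),\ \beta''\in\Ho^s(G,B'')).
\]
The plan is to transport the augmented cup product through the coinduced Tate product $(\Ind(A),\Ind(B),\Ind(C))$ and reduce the whole statement to the two functoriality results already proved, Proposition \ref{functoriality of Acup} and Proposition \ref{twisted functoriality of acup}. The two auxiliary ingredients are Shapiro's lemma, which makes $e^*$ an isomorphism in every degree, together with the descriptions $\res=e^*\circ i_*$ from \eqref{i is res} and $\cores=\pi_*\circ(e^*)^{-1}$ from \eqref{pi is cores}.

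First I would produce the two morphisms of Tate products carried by the coinduction data. The pair $(\incl,e)$ is a morphism of Tate products from $(\Ind(A),\Ind(B),\Ind(C))$ over $G$ to $(A,B,C)$ over $H$: the two ladder diagrams are exact and commute because $\Ind$ is exact and $e$ is natural, and the bilinear maps are preserved by Lemma \ref{e preserves bilinear maps}. Proposition \ref{functoriality of Acup} then gives $e^*(\xi\acup\eta)=e^*(\xi)\acup e^*(\eta)$, and since $e^*$ is an isomorphism, the inverse $(e^*)^{-1}$ is likewise multiplicative for the augmented cup product. Over the fixed group $G$, the maps $\pi$ (downward on the $A$- and $C$-rows) and $i$ (upward on the $B$-row) form a \emph{twisted} morphism between the $G$-Tate products $(\Ind(A),\Ind(B),\Ind(C))$ and $(A,B,C)$: naturality of $\pi$ and of $i$ together with exactness of $\Ind$ supply the ladder diagrams, and the twisted bilinear compatibility is precisely Lemma \ref{pi almost preserves bilinear maps}. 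Proposition \ref{twisted functoriality of acup} then yields $\pi_{C*}(\xi\acup i_{B''*}\eta)=\pi_{A''*}(\xi)\acup\eta$.

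With these two identities the conclusion is a short chain. Starting from the left-hand side and substituting $\cores=\pi_*\circ(e^*)^{-1}$, I would write $\cores(\alpha''\acup\res\beta'')=\pi_{C*}\big((e^*)^{-1}(\alpha''\acup\res\beta'')\big)$, then push $(e^*)^{-1}$ inside the augmented cup product using its multiplicativity, rewrite $(e^*)^{-1}\res\beta''=i_{B''*}\beta''$ via \eqref{i is res}, apply the twisted identity of the previous paragraph to obtain $\pi_{A''*}\big((e^*)^{-1}\alpha''\big)\acup\beta''$, and finally recognize $\pi_{A''*}\big((e^*)^{-1}\alpha''\big)=\cores(\alpha'')$ by \eqref{pi is cores}. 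This lands on $\cores(\alpha'')\acup\beta''$, as required.

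The substantive part is not this final chain but confirming that the coinduction data genuinely assemble into the two (twisted) morphisms of Tate products. The bilinear compatibilities, which would otherwise be the crux, are already isolated in Lemmas \ref{e preserves bilinear maps} and \ref{pi almost preserves bilinear maps}, so what remains is bookkeeping: the exactness and naturality of the $\Ind$-ladders, and—for the twisted step—the fact that $\pi$ is a morphism of $G$-modules, not merely of $H$-modules, so that Proposition \ref{twisted functoriality of acup}, stated over a fixed group, applies. The one genuinely delicate point is transferring multiplicativity from $e^*$ to $(e^*)^{-1}$, which is legitimate precisely because Shapiro's lemma guarantees that $e^*$ is an isomorphism in each of the relevant degrees.
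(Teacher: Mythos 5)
Your proposal is correct and follows essentially the same route as the paper's proof: both transport the statement to the coinduced Tate product $(\Ind(A),\Ind(B),\Ind(C))$, handle the $e^*$-step via Lemma \ref{e preserves bilinear maps} together with Proposition \ref{functoriality of Acup} and Shapiro's lemma, and handle the $\pi$/$i$-step via Lemma \ref{pi almost preserves bilinear maps} together with Proposition \ref{twisted functoriality of acup}, using the identifications $\res=e^*\circ i_*$ and $\cores=\pi_*\circ(e^*)^{-1}$. The only difference is presentational: the paper stacks the two steps into a single commutative diagram (\ref{equivalent diagram}), while you chase the same identities element-wise.
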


\begin{proof}
By (\ref{i is res}) and (\ref{pi is cores}), the desired commutativity is equivalent to the commutativity of 
\begin{equation}
\label{equivalent diagram}
    \begin{tikzcd} 
    \Ho^r(H,A'')\arrow[d,"(e_A^*)^{-1}", "\wr"'] &
    \times &
    \Ho^s(H,B'') \quad\quad \arrow[r,"\acup"] &
    \quad\quad\Ho^{r+s+1}(H,C)\arrow[d,"(e_C^*)^{-1}", "\wr"']\\
    \Ho^r(G,\Ind(A'')) \arrow[d, "\pi_{A*}"] &
    \times &
    \Ho^s(G,\Ind(B'')) \arrow[u,"e_B^*", "\wr"'] \quad\quad \arrow[r,"\acup"] &
    \quad\quad\Ho^{r+s+1}(G,\Ind(C)) \arrow[d, "\pi_{C*}"] \\
    \Ho^r(G,A'') &
    \times &
    \Ho^s(G,B'') \arrow[u,"i_{B*}"] \quad\quad \arrow[r,"\acup"] &
    \quad\quad\Ho^{r+s+1}(G,C).
    \end{tikzcd}
\end{equation}

By Lemma \ref{e preserves bilinear maps}, the map $e^*$ preserves bilinear maps. Hence, by the functoriality of the augmented cup product (Proposition \ref{functoriality of Acup}), the upper square of diagram (\ref{equivalent diagram}) is commutative.

By Lemma \ref{pi almost preserves bilinear maps}, the assumptions for Proposition \ref{twisted functoriality of acup} are satisfied, which implies that the lower square of diagram (\ref{equivalent diagram}) is commutative, and hence the whole diagram is commutative.
\end{proof}

\medskip
\medskip

\section{Applications}
\label{Applications}
In the important paper of Lichtenbaum from 1969 \cite{Lich}, he constructs three compatible pairings, involving the Brauer and Picard groups of curves and their cohomology groups, and shows that over $p$-adic fields they are perfect (see the introduction). Two of those pairings are special cases of the augmented cup product. Lichtenbaum just mentions this in a brief sentence, with no further information. However, his construction is direct, in the level of cochains, without any use of the general machinery of the augmented cup products as given here in Chapters \ref{section about Augmented cup products} and \ref{Section about properties of the augmented cup product}. Our aim is to give an alternative approach, and to construct those pairings using the general theory of the augmented cup products.
\\

\subsection{Geometrical background}
\label{section about geometrical bacground}
\quad\\
Let us introduce briefly the geometrical notions in Lichtenbaum's construction. Our principal reference for basic notations of curves over fields is \cite{Silverman}*{Chapter 2}. One can find further relevant information in  \cite{Jarden05}*{Chapter 5},  \cite{Jarden11}*{Chapter 11} or \cite{Efrat01}*{Section 1}.

Let $k$ be a field, $\bar{k}$ the separable closure of $k$, and $G=\Gal(\bar{k}/k)$ the absolute Galois group of $k$. Let $X$ be a smooth, proper, geometrically connected curve over $k$, and let $K$ be the function field of $X$ (see \cite{Silverman}*{Chapter 2} for detailed definitions).
Let $\bar{X}$ be the extension of $X$ by scalars to $\bar{k}$ (i.e. $\bar{X}=X\otimes_{k}\bar{k}$), and $\bar{K}$ the function field of $\bar{X}$. The \textbf{divisor group} of a curve $X$, denoted by $\Div(X)$, is the free abelian group on the points of $X$. Namely, its elements are the formal sums $D=\sum_{P\in X}{n_{P}P}$, where $n_P\in\mathbb{Z}$, and $n_P=0$ for all but finitely many points. The \textbf{support} $|D|$ of the divisor $D$ is $\{P\in X| \quad n_{P}\neq 0\}$. We have the \textbf{degree homomorphism}
\[
\begin{split}
\Deg: \Div(X)&\to \mathbb{Z}\\
D \quad &\mapsto \sum_{P\in X}{n_P[k(P):k]}.
\end{split}
\]
Here, $k(P)$ is the residue field of $P$. Denote by $\Div_0(X)$ the kernel of $\Deg$. 

For a curve $\bar{X}$ over $\bar{k}$, there exists $P\in \bar{X}$ with $\bar{k}(P)=\bar{k}$ \cite{Liu}*{Chapter 3, Proposition 2.20}. Hence the degree homomorphism $\Deg: \Div(\bar{X})\to \mathbb{Z}$ is surjective, and we have a short exact sequence
\begin{equation}
    \label{short exact sequence with deg}
    0\xrightarrow{} \Div_0(\bar{X})\xhookrightarrow{} \Div(\bar{X}) \xrightarrow{\Deg} \mathbb{Z} \xrightarrow{} 0.
\end{equation}

Back to the case of a curve $X$ over a general field $k$ (not necessarily separably closed). Consider the \textbf{divisor map}
\[
\begin{split}
    \fdiv: K^*&\to \Div(X)\\
    f &\mapsto \sum_{P\in X}{v_P(f)P}.
\end{split}
\]
Here, $v_P$ is a valuation on $K$ corresponding to the point $P$. It is a non-trivial discrete valuation which is trivial on $k$. 

A divisor $D$ is called a \textbf{principal divisor} if $D\in\Img(\fdiv)$. The \textbf{Picard group} $\Pic(X)$ of $X$ is the cokernel of the map $\fdiv$. We have
\[
\begin{split}
&(1)\quad \Ker(\fdiv)=k^* \hbox{ and} \\
&(2)\quad \Deg\circ\fdiv=0
\end{split}
\]\cite{Silverman}*{Chapter 2, Proposition 3.1}. As a consequence, the group $\Img(\fdiv)$ of principal divisors is actually a subgroup of $\Div_0(X)$. Let $\Pic_0(X)=\Div_0(X)/\Img(\fdiv)$ be the quotient group. Further, we get a well-defined induced degree homomorphism
\[
\begin{split}
\Deg: \Pic(X)&\to \mathbb{Z}\\
E \quad &\mapsto \Deg(D),
\end{split}
\]
where $D$ is a divisor in the class of $E$, and a well-defined induced divisor map
\[
\begin{split}
\fdiv: K^*/k^* &\to \Div_0(X)\\
\tilde{f} \quad &\mapsto \fdiv(f),
\end{split}
\]
where $f\in K^*$ is in the class of $\tilde{f}$.
Thus we have an exact and commutative diagram:
\begin{equation}
\label{diagram of 2 short exact sequences from div to pic}
    \begin{tikzcd}
    & & 0 \arrow[d] & 0 \arrow[d] & \\
    0 \arrow[r] &
    K^*/k^* \arrow[r,"\fdiv"] \arrow[d,equal] &
    \Div_0(X) \arrow[r] \arrow[d,hook] &
    \Pic_0(X) \arrow[r] \arrow[d,hook] &
    0 \\
    0 \arrow[r] &
    K^*/k^* \arrow[r,"\fdiv"] &
    \Div(X) \arrow[r] \arrow[d,"\Deg"] &
    \Pic(X) \arrow[r] \arrow[d,"\Deg"] &
    0 \\
    & & \mathbb{Z} \arrow[r, equal] & \mathbb{Z} & .
    \end{tikzcd}    
\end{equation}

By (\ref{short exact sequence with deg}), we have a short exact sequence
\[
0\xrightarrow{} \Pic_0(\bar{X}) \xrightarrow{} \Pic(\bar{X}) \xrightarrow{\Deg} \mathbb{Z} \xrightarrow{} 0 ,
\]
which induces the exact sequence of cohomology groups \begin{equation}
\label{long exact sequence with pic}
 \Ho^1(G,\Pic_0(\bar{X})) \xrightarrow{} \Ho^1(G,\Pic(\bar{X})) \xrightarrow{} \Ho^1(G,\mathbb{Z}).    
\end{equation}
Recall that $\Ho^1(G,\mathbb{Z})=0$ (see the proof of Corollary \ref{corollary with E=Z}). Thus the map $\Ho^1(G,\Pic_0(\bar{X}))\to \Ho^1(G,\Pic(\bar{X}))$ is surjective.

The \textbf{evaluation} of a function $f\in K^*$ at a point $P\in X$ is the result of the substitution of the coordinates of $P$ in the variables of $f$ (recall that $f$ is simply a quotient of two polynomials with coefficients in $k$). Hence we have $f(P)\in k \cup \{ \infty\}$.
The evaluation of a function $f\in K^*$ at a divisor $D=\sum_{P\in X}{n_{P}P}$ with $f(P)\notin \{0,\infty\}$ for every $P$ in the support $|D|$ of $D$ is defined by
\[
f(D)= \prod_{P\in |D|}f(P)^{n_{P}}\in k^*.
\]
By Weil reciprocity law (\cite{Lang83}*{Page 172}, \cite{Silverman}*{Exercise 2.11}), for $f,g\in \bar{K}^*$ with $|\fdiv(f)|\cap |\fdiv(g)|=\emptyset$ we have
\[
f(\fdiv(g))=g(\fdiv(f)).
\]

The group $G$ acts on $P=(x_1,\ldots,x_n)\in \bar{X}$ by the Galois action on each coordinate, i.e.
\[
\sigma\cdot P = (\sigma(x_1),\ldots,\sigma(x_n)),
\]
and acts on $D=\sum_{P\in \bar{X}}n_{P}P\in \Div(\bar{X})$ by
\[
\sigma\cdot D=\sum_{P\in \bar{X}}n_{P}(\sigma\cdot P).
\]
This makes $\Div(\bar{X})$ a $G$-module.

The Brauer group $\Br(X)$ of a curve $X$ is defined in \cite{Lich}*{Section 1} as the kernel of the induced homomorphism 
\[
\fdiv_*:\Ho^2(G,\bar{K}^*)\to \Ho^2(G,\Div(\bar{X})).
\]
In \cite{Lich}*{Appendix} it is shown that this definition of the Brauer group is equivalent to the usual one using \'etale cohomology.

By diagram (\ref{diagram of 2 short exact sequences from div to pic}) and the functoriality of the connecting homomorphism, we have an exact and commutative diagram:
\begin{equation}
    \label{long exact diagram from div to pic}
    \begin{tikzcd}
    \Ho^{1}(G,\Div_0(\bar{X})) \arrow[d] \arrow[r] &
    \Ho^{1}(G,\Pic_0(\bar{X})) \arrow[d] \arrow[r, "\delta"] &
    \Ho^{2}(G,\bar{K}^*/\bar{k}^*) \arrow[d, equal] \arrow[r] &
    \Ho^{2}(G,\Div_0(\bar{X})) \arrow[d] \\
    \Ho^{1}(G,\Div(\bar{X})) \arrow[r] &
    \Ho^{1}(G,\Pic(\bar{X})) \arrow[r, "\delta"] &
    \Ho^{2}(G,\bar{K}^*/\bar{k}^*) \arrow[r] &
    \Ho^{2}(G,\Div(\bar{X})) .
    \end{tikzcd}
\end{equation}
As pointed out in \cite{Lich}*{Section 2}, $\Ho^{1}(G,\Div(\bar{X}))=0$, hence the following diagram is exact and commutative:
\begin{equation}
    \label{The basic exact diagram}
    \begin{tikzcd} [column sep=small]
     0 \arrow[d] &
    0 \arrow[d]  \\
    \Br(X) \arrow[d, hook] &
    \Ho^{1}(G,\Pic(\bar{X})) \arrow[d,"\delta"]  \\
    \Ho^{2}(G,\bar{K}^*) \arrow[d] \arrow[r] &
    \Ho^{2}(G,\bar{K}^*/\bar{k}^*) \arrow[d] \\
    \Ho^{2}(G,\Div(\bar{X})) \arrow[r, equal] & \Ho^{2}(G,\Div(\bar{X})). 
    \end{tikzcd}
\end{equation}
Let $\phi: \Br(X)\to \Ho^{2}(G,\bar{K}^*/\bar{k}^*)$ be the composition of the embedding $\Br(X)\hookrightarrow \Ho^{2}(G,\bar{K}^*)$ and the induced homomorphism  $\Ho^{2}(G,\bar{K}^*)\to \Ho^{2}(G,\bar{K}^*/\bar{k}^*)$.
By diagram (\ref{The basic exact diagram}), for every element in the image of $\phi$ there exists a unique preimage in $\Ho^{1}(G,\Pic(\bar{X}))$, and we get an induced homomorphism $h: \Br(X)\to \Ho^1(G,\Pic(\bar{X}))$, making the left triangle in the following diagram commutative:
\begin{equation}
    \label{factoring j, daigram}
    \begin{tikzcd} [column sep=small]
    \Br(X) \arrow[dr, "\phi"'] \arrow[r, "h"] &
    \Ho^{1}(G,\Pic(\bar{X})) \arrow[d,"\delta"] &
    \Ho^{1}(G,\Pic_0(\bar{X})) \arrow[d,"\delta"] \arrow[l, two heads]   \\
    & \Ho^{2}(G,\bar{K}^*/\bar{k}^*) \arrow[r, equal] &
    \Ho^{2}(G,\bar{K}^*/\bar{k}^*).
    \end{tikzcd}
\end{equation}
By diagram (\ref{long exact diagram from div to pic}), the right square of diagram (\ref{factoring j, daigram}) is also commutative.
\begin{lem}
\label{The image of phi is in the image of delta}
The image of $\phi: \Br(X)\xrightarrow{} \Ho^{2}(G,\bar{K}^*/\bar{k}^*)$ is contained in the image of $\delta: \Ho^{1}(G,\Pic_0(\bar{X}))\xrightarrow{} \Ho^{2}(G,\bar{K}^*/\bar{k}^*)$.
\end{lem}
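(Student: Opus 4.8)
The plan is to read off the result by a short diagram chase in (\ref{factoring j, daigram}), using the three structural facts already assembled there: that the left triangle commutes, so $\phi = \delta\circ h$ with $h:\Br(X)\to \Ho^1(G,\Pic(\bar{X}))$; that the horizontal map $\Ho^1(G,\Pic_0(\bar{X}))\twoheadrightarrow \Ho^1(G,\Pic(\bar{X}))$ is surjective (this was established just after (\ref{long exact sequence with pic}), as a consequence of $\Ho^1(G,\mathbb{Z})=0$); and that the right square of (\ref{factoring j, daigram}) commutes, which comes from diagram (\ref{long exact diagram from div to pic}) and identifies the connecting homomorphisms of the two rows of (\ref{diagram of 2 short exact sequences from div to pic}).

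First I would take an arbitrary element $x$ in the image of $\phi$, writing $x=\phi(b)$ for some $b\in \Br(X)$. By commutativity of the left triangle this gives $x=\delta(h(b))$, where $h(b)\in \Ho^1(G,\Pic(\bar{X}))$ and $\delta$ here is the connecting homomorphism of the lower sequence of (\ref{diagram of 2 short exact sequences from div to pic}). Next I would lift $h(b)$ along the surjection, obtaining $c\in \Ho^1(G,\Pic_0(\bar{X}))$ whose image in $\Ho^1(G,\Pic(\bar{X}))$ equals $h(b)$. Finally, applying the commutativity of the right square of (\ref{factoring j, daigram}), I would conclude that $\delta(c)=\delta(h(b))=x$, where now $\delta$ denotes the connecting homomorphism of the upper sequence of (\ref{diagram of 2 short exact sequences from div to pic}). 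Hence $x$ lies in the image of $\delta:\Ho^1(G,\Pic_0(\bar{X}))\to \Ho^2(G,\bar{K}^*/\bar{k}^*)$, as required.

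There is no genuine obstacle: every ingredient (the factorization $\phi=\delta\circ h$, the surjectivity of the map between the two $\Ho^1$ groups, and the compatibility of the two connecting homomorphisms) has already been recorded in the preceding diagrams, so the proof is a one-step chase. The only point deserving a moment of care is purely notational, namely keeping straight that the two occurrences of $\delta$ refer to the connecting maps of the two distinct rows of (\ref{diagram of 2 short exact sequences from div to pic}), and that the commuting right square of (\ref{factoring j, daigram}) is exactly the identity that lets one transfer a preimage from $\Ho^1(G,\Pic(\bar{X}))$ back to $\Ho^1(G,\Pic_0(\bar{X}))$ without changing the image under $\delta$.
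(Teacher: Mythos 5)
Your proof is correct and takes essentially the same route as the paper: the paper's own proof is a one-line citation of diagram (\ref{factoring j, daigram}) together with the surjectivity of $\Ho^{1}(G,\Pic_0(\bar{X}))\to \Ho^{1}(G,\Pic(\bar{X}))$, and your argument is exactly the explicit diagram chase (factorization $\phi=\delta\circ h$, lifting along the surjection, and the commuting right square) that this citation encodes.
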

\begin{proof}
It follows from diagram (\ref{factoring j, daigram}) and the surjectivity of the map
\[
\Ho^{1}(G,\Pic_0(\bar{X}))\to \Ho^{1}(G,\Pic(\bar{X})) \hbox{ (diagram (\ref{long exact sequence with pic})).} \qedhere
\]
\end{proof}

\medskip
\medskip

\subsection{The pairing $\rho_0$ - Lichtenbaum's construction} 
\quad\\
Consider the following setting. Let $X$ be a proper, smooth, geometrically connected curve over a field $k$, with separable closure $\bar{k}$ and absolute Galois group $G$. Let $\bar{X}$ be the extension of $X$ by scalars to $\bar{k}$. Recall that the Brauer group $\Br(k)$ of $k$ is canonically isomorphic to $\Ho^2(G,\bar{k}^*)$.

Lichtenbaum's construction of the pairing
\[
     \rho_0: \Ho^0(G,\Pic_0(\bar{X}))\times \Ho^1(G,\Pic_0(\bar{X}))\to\Br(k)
\]
is as follows: Take $x\in \Ho^0(G,\Pic_0(\bar{X}))$ and $\alpha\in \Ho^1(G,\Pic_0(\bar{X}))$. Choose $y\in \Z^0(G,\Pic_0(\bar{X}))$ representing $x$, and choose $E\in \C^0(G,\Div_0(\bar{X}))=\Div_0(\bar{X})$ mapping onto $y$. Choose $a_{\sigma}\in \Z^1(G,\Pic_0(\bar{X}))$ representing $\alpha$, and choose $b_{\sigma}\in \C^1(G,\Div_0(\bar{X}))$ mapping onto $a_{\sigma}$. The coboundaries of these representatives, namely $d_0E\in \C^1(G,\Div_0(\bar{X}))$ and $d_1b_{\sigma}\in \C^2(G,\Div_0(\bar{X}))$, are in the image of the induced morphism $\fdiv_*$. Namely, there exist $f_{\sigma,\tau}\in \C^{2}(G,\bar{K}^*/\bar{k}^*)$ and  $g_{\sigma}\in\C^{1}(G,\bar{K}^*/\bar{k}^*)$ such that $\fdiv_*(g_{\sigma})=d_0E$ and $\fdiv_*(f_{\sigma,\tau})=d_1b_{\sigma}$. It follows from Proposition \ref{(a) df is in A' (b)f is in A' up to coboundary}(a). Now define,
\[
\rho_0(x,\alpha)=[g_{\sigma}(\sigma b_{\tau}) \cdot f_{\sigma,\tau}(E)].
\]
Lichtenbaum shows directly that this is independent of the various choices made and that $g_{\sigma}(\sigma b_{\tau}) \cdot f_{\sigma,\tau}(E)$ is indeed an element of $\Z^2(G,\bar{k}^*)$, so it has a cohomology class which is in $\Ho^2(G,\bar{k}^*) = \Br(k)$.\\

\subsection{The pairing $\rho_0$ - interpretation in terms of the augmented cup product}
\quad\\
Let $\Bar{K}$ be the field of rational functions on $\Bar{X}$. Consider the short exact sequence of $G$-modules,
\[
0\xrightarrow{}\bar{K}^*/\bar{k}^*\xrightarrow{\fdiv}\Div_0(\bar{X})\xrightarrow{j}\Pic_0(\bar{X})\xrightarrow{}0.
\]
We have a natural pairing:
\begin{equation}
\label{The pairing which give rise to the augmented cup product}
\begin{split}
\bar{K}^*/\bar{k}^*\times \Div_0(\bar{X})&\to \bar{k}^*  \\ (f,D) &\mapsto f(D).
\end{split}    
\end{equation}
Due to Weil reciprocity law, this pairing commutes on $\bar{K}^*/\bar{k}^* \times \bar{K}^*/\bar{k}^*$. Thus, in view of Example \ref{Example of Tate product from one short exact sequence}, we get a Tate product, and can define for every non-negative integers $r,s$ an augmented cup product:
\[
\acup: \Ho^r(G,\Pic_0(\bar{X})) \times \Ho^s(G,\Pic_0(\bar{X})) \to \Ho^{r+s+1}(G,\bar{k}^*).
\]
In particular, we get an augmented cup product:
\[
\acup: \Ho^0(G,\Pic_0(\bar{X})) \times \Ho^1(G,\Pic_0(\bar{X})) \to \Ho^2(G,\bar{k}^*)= \Br(k).
\]

We now show that Lichtenbaum's $\rho_0$ is exactly this augmented cup product. Consider $x\in \Ho^0(G,\Pic_0(\bar{X}))$ and $\alpha\in \Ho^1(G,\Pic_0(\bar{X}))$ as before. In order to compute $x\acup \alpha$, we should choose representatives $E\in\C^0(G,\Div_0(\bar{X}))$ such that $x=[j_*E]$, and $b\in\C^1(G,\Div_0(\bar{X}))$ such that $\alpha=[j_*b]$. Note that these are the same representatives from Lichtenbaum's construction. As shown in Proposition \ref{(a) df is in A' (b)f is in A' up to coboundary}(a), the coboundaries of $b$ and $E$ lie in the kernel of $j_*$. Use the following commutative diagram with exact rows to see this:
\begin{center}
        \begin{tikzcd}
        0\arrow[r] & {\rm C}^{0}(G,\bar{K}^*/\bar{k}^*)\arrow[r,"\fdiv_*"]\arrow[d,"d_0"] & {\rm C}^{0}(G,\Div_0(\bar{X}))\arrow[r,"j_*"]\arrow[d,"d_{0}"] & {\rm C}^{0}(G,\Pic_0(\bar{X}))\arrow[r]\arrow[d,"d_0"] & 0\\
        0\arrow[r] & {\rm C}^{1}(G,\bar{K}^*/\bar{k}^*)\arrow[r,"\fdiv_*"]\arrow[d,"d_1"] & {\rm C}^{1}(G,\Div_0(\bar{X}))\arrow[r,"j_*"]\arrow[d,"d_1"] & {\rm C}^{1}(G,\Pic_0(\bar{X}))\arrow[r]\arrow[d,"d_1"] & 0\\
        0\arrow[r] & {\rm C}^{2}(G,\bar{K}^*/\bar{k}^*)\arrow[r,"\fdiv_*"] & {\rm C}^{2}(G,\Div_0(\bar{X}))\arrow[r,"j_*"] & {\rm C}^{2}(G,\Pic_0(\bar{X}))\arrow[r] & 0.
        \end{tikzcd}
\end{center}
Hence, we can take $f\in{\C}^{2}(G,\bar{K}^*/\bar{k}^*)$ with $\fdiv_*(f)=d_1b$, and $g\in{\C}^{1}(G,\bar{K}^*/\bar{k}^*)$ with $\fdiv_*(g)=d_0E$. Since $\fdiv_*$ is an embedding, we may identify $f$ with $d_1b$, and $g$ with $d_0E$.

By the definition of the augmented cup product, we have:
\[
x\acup\alpha=[d_0E\cup b + (-1)^{0}\cdot E\cup d_1b]=[g\cup b + E\cup f].
\]
Computing the cup products by its definition, and passing from additive to multiplicative writing (for the multiplicative group $\bar{k}^*$), we get:
\begin{equation}
\label{rho_0 is acup}
\begin{split}
x\acup\alpha &= [(g\cup b)(\sigma,\tau) \cdot (E\cup f)(\sigma,\tau)]\\
&= [(g_{\sigma}\times\sigma b_{\tau})\cdot (E\times f_{\sigma,\tau})] = [g_{\sigma}(\sigma b_{\tau})\cdot f_{\sigma,\tau}(E)]\\
&=\rho_0(x,\alpha).
\end{split}
\end{equation}
Thus, $\rho_0$ is indeed an example of an augmented cup product, and it is well defined by the general theory (Proposition \ref{well definednece of augmented cup product}). 

\medskip
\medskip

\subsection{The pairing $\rho$ - Lichtenbaum's construction}
\quad\\
We now study the pairing
\[
\rho: \Pic_0(X) \times \Ho^1(G,\Pic(\bar{X})) \to \Br(k).
\]
Lichtenbaum defines this pairing as follows. He considers the short exact sequence of discrete $G$-modules 
\[
0\xrightarrow{} \Pic_0(\bar{X}) \xrightarrow{} \Pic(\bar{X}) \xrightarrow{\Deg} \mathbb{Z} \xrightarrow{} 0 ,
\]
and the exact sequence of cohomology groups it induces:
\begin{equation}
\label{phi is surjective}
\mathbb{Z}=\Ho^0(G,\mathbb{Z}) \xrightarrow{\delta} \Ho^1(G,\Pic_0(\bar{X})) \xrightarrow{\phi} \Ho^1(G,\Pic(\bar{X})) \xrightarrow{} \Ho^1(G,\mathbb{Z}) =0.    
\end{equation}
Since $\Pic_0(X)\subseteq \Ho^0(G,\Pic_0(\bar{X}))$ and $\phi$ is surjective, by proving that the restricted pairing 
\[
     \rho_0: \Pic_0(X)\times \Ker(\phi)\to\Br(k)
\]
is trivial, he gets that $\rho_0$ induces a pairing $\rho$ as above. Lichtenbaum's proof of the triviality of the restricted pairing is by computations based on the geometrical properties of the divisors groups (see \cite{Lich}*{Setion 4} for details).

\medskip
\medskip

\subsection{The pairing $\rho$ - interpretation in terms of the augmented cup product}
\quad\\
We now show that this construction is just a specific example of a more general result from the theory of the augmented cup product that we have developed in Chapter \ref{Section about properties of the augmented cup product}. The essence of the generalization is to prove that the restricted pairing is trivial, using general cohomological arguments as in Theorem \ref{How to induce rho} (1). Recall that $\rho_0$ is an augmented cup product, and that the short exact sequence of the Tate product which gives rise to it is:
\[
0\xrightarrow{}\bar{K}^*/\bar{k}^*\xrightarrow{}\Div_0(\bar{X})\xrightarrow{}\Pic_0(\bar{X})\xrightarrow{}0.
\]
This short exact sequence can be extended to an exact and commutative diagram
\begin{equation}
\label{The extended diagram}
        \begin{tikzcd}
        &
        0 \arrow[d] & 0 \arrow[d] & \\
        &
        \bar{K}^*/\bar{k}^* \arrow[d] \arrow[r, equal]  &
        \bar{K}^*/\bar{k}^* \arrow[d]  &\\
        0 \arrow[r] &
        \Div_0(\bar{X}) \arrow[r] \arrow[d] &
        \Div(\bar{X}) \arrow[r, "\Deg"] \arrow[d] &
        \mathbb{Z} \arrow[r] \arrow[d, equal] &
        0 \\
        0 \arrow[r] &
        \Pic_0(\bar{X}) \arrow[r] \arrow[d] &
        \Pic(\bar{X}) \arrow[r, "\Deg"] \arrow[d] &
        \mathbb{Z} \arrow[r] &
        0 \\
        &
        0 &
        0 .
        \end{tikzcd}
\end{equation}
Hence, the assumptions of Corollary \ref{corollary with E=Z} are satisfied, and we get that the augmented cup product
\[
\acup: \Ho^0(G,\Pic_0(\bar{X})) \times \Ho^1(G,\Pic_0(\bar{X})) \to \Ho^2(G,\bar{k}^*)= \Br(k),
\]
induces a bilinear map
\[
\Img (\Ho^{0}(G,\Div_0(\bar{X}))\xrightarrow{} \Ho^{0}(G,\Pic_0(\bar{X}))) \times \Ho^{1}(G,\Pic(\bar{X})) \to \Br(k).
\]
Since $\Ho^{0}(G,\Div_0(\bar{X}))=\Div_0(X)$, the image of $\Ho^{0}(G,\Div_0(\bar{X}))\xrightarrow{} \Ho^{0}(G,\Pic_0(\bar{X}))$ is $\Pic_0(X)$. Thus, we get the requested pairing:
\[
\rho: \Pic_0(X) \times \Ho^1(G,\Pic(\bar{X})) \to \Br(k).
\]

Note that this is the same induced pairing as in Lichtenbaum's paper. In both cases, to compute $\rho(x,\alpha)$, for $x\in \Pic_0(X)$ and $\alpha \in \Ho^1(G,\Pic(\bar{X}))$, we take $\beta \in \Ho^1(G,\Pic_0(\bar{X}))$ mapping onto $\alpha$, and define 
\begin{equation}
\label{definition of rho}
\rho (x,\alpha)=\rho_0(x,\beta).
\end{equation}
\\
\subsection{The pairing $\psi$} 
\quad\\
This pairing is not a special case of the augmented cup product as the former two, but we show its compatibility with the previous pairings using the general theory from Chapter \ref{Section about properties of the augmented cup product}.
The pairing is 
\[
\begin{split}
     \psi: \Div(X)\times\Br(X)&\to\Br(k)\\
     (D,\alpha)&\mapsto [f_{\sigma,\tau}(D)],
\end{split}
\]
where $f_{\sigma,\tau}\in \Z^{2}(G,\bar{K}^*)$ is a $2$-cocycle representing $\alpha$. Lichtenbaum shows that $\psi$ is well-defined \cite{Lich}*{Section 3}, and that $\psi$ vanishes when $D$ is a principal divisor \cite{Lich}*{Theorem 1}. Hence, $\psi$ induces a pairing:
\begin{equation}
\label{definition of psi}
    \begin{split}
     \psi: \Pic(X)\times\Br(X)&\to\Br(k)\\
     (E,\alpha)&\mapsto [f_{\sigma,\tau}(D)].
\end{split}
\end{equation}
Here $D$ is a divisor mapping onto $E$, and $f_{\sigma,\tau}$ is as above.

\medskip
\medskip

\subsection{Compatibility of $\psi$ with $\rho$}
\quad\\
Lichtenbaum proves \cite{Lich}*{Section 4} that $\psi$ and $\rho$ are compatible, in the sense of the following proposition. We prove this compatibility using the general theory of the augmented cup products.  

\begin{prop}
\label{prop of compatibility of rho and psi}
Let $h:\Br(X)\to \Ho^1(G,\Pic(\bar{X}))$ be the homomorphism from diagram (\ref{factoring j, daigram}).  
The following diagram is commutative:
\begin{equation}
\label{compatibility of rho and psi}
\begin{tikzcd}
\rho: \quad \Pic_0(X) \quad  
\times \quad
\Ho^{1}(G,\Pic(\bar{X})) \arrow[d,hook, xshift=-9ex] \arrow[r] &
\Br(k) \arrow[d, equal] \\
\psi: \quad \Pic(X) \qquad
\times \qquad
\Br(X) \qquad \arrow[u, "h", xshift=9ex] \arrow[r] &
\Br(k).
\end{tikzcd}
\end{equation}
\end{prop}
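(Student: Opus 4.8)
The plan is to deduce the identity $\rho(x,h(\alpha))=\psi(\iota x,\alpha)$ (for $x\in\Pic_0(X)$, $\alpha\in\Br(X)$, where $\iota\colon\Pic_0(X)\hookrightarrow\Pic(X)$) from the compatibility of the augmented cup product with the connecting homomorphism, Proposition~\ref{compatibility with connecting homomorphism}. Recall that $\rho_0$ is the augmented cup product for the Tate product arising from $0\to\bar K^*/\bar k^*\xrightarrow{\fdiv}\Div_0(\bar X)\xrightarrow{j}\Pic_0(\bar X)\to 0$, and that by (\ref{definition of rho}) together with the identification $\rho_0=\acup$ established around (\ref{rho_0 is acup}) one has $\rho(x,h(\alpha))=\rho_0(x,\tilde\beta)=x\acup\tilde\beta$, where $\tilde\beta\in\Ho^1(G,\Pic_0(\bar X))$ is any preimage of $h(\alpha)$ under the surjection $\Ho^1(G,\Pic_0(\bar X))\to\Ho^1(G,\Pic(\bar X))$.

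First I would lift $x$ itself. Since $\Ho^0(G,\Div_0(\bar X))=\Div_0(X)$ surjects onto $\Pic_0(X)$, there is $\tilde x\in\Div_0(X)$ with $j_*(\tilde x)=x$. Proposition~\ref{compatibility with connecting homomorphism}(1), applied with $r=0$, $s=1$, then gives
\[
x\acup\tilde\beta=j_*(\tilde x)\acup\tilde\beta=(-1)^0\,\tilde x\cup\delta(\tilde\beta)=\tilde x\cup\delta(\tilde\beta),
\]
where the cup product on the right is formed with the pairing $\Div_0(\bar X)\times\bar K^*/\bar k^*\to\bar k^*$ of the Tate product, and $\delta(\tilde\beta)\in\Ho^2(G,\bar K^*/\bar k^*)$. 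The next step is to identify $\delta(\tilde\beta)$. Because $\tilde\beta$ maps to $h(\alpha)$ under $\Ho^1(G,\Pic_0(\bar X))\to\Ho^1(G,\Pic(\bar X))$, the commuting square of connecting maps in diagram~(\ref{long exact diagram from div to pic}) together with the defining relation $\delta\circ h=\phi$ from diagram~(\ref{factoring j, daigram}) yields $\delta(\tilde\beta)=\phi(\alpha)$, the image of $\alpha$ under the natural map $\Ho^2(G,\bar K^*)\to\Ho^2(G,\bar K^*/\bar k^*)$.

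It then remains to compute $\tilde x\cup\phi(\alpha)$ and to recognize it as $\psi(\iota x,\alpha)$. Writing $\tilde x=E\in\Div_0(X)$ and choosing a cocycle $f_{\sigma,\tau}\in\Z^2(G,\bar K^*)$ representing $\alpha$, the class $\phi(\alpha)$ is represented by the image $\bar f_{\sigma,\tau}$ of $f_{\sigma,\tau}$ in $\bar K^*/\bar k^*$. Since $\tilde x$ is a $0$-cochain, the cup-product formula (\ref{Cup product between two cochains groups}) carries no Galois factor, so
\[
(E\cup\bar f)(\sigma,\tau)=E\times\bar f_{\sigma,\tau}=f_{\sigma,\tau}(E),
\]
the last equality holding because evaluation of a function at a divisor of degree $0$ is unchanged under scaling by $\bar k^*$. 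As $E\in\Div_0(X)\subseteq\Div(X)$ is a divisor whose class in $\Pic(X)$ is $\iota x$, the cocycle $f_{\sigma,\tau}(E)$ is exactly the one defining $\psi(\iota x,\alpha)$ in (\ref{definition of psi}); hence $\tilde x\cup\phi(\alpha)=[f_{\sigma,\tau}(E)]=\psi(\iota x,\alpha)$, completing the chain of equalities. The main obstacle I anticipate lies in this last computation: the evaluation pairing $(D,f)\mapsto f(D)$ is only defined when $|\fdiv(f)|$ and $|D|$ are disjoint, so at the cochain level one must choose the representatives $f_{\sigma,\tau}$ and $E$ with compatible, disjoint supports before the displayed equalities literally make sense. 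The independence of the resulting cohomology class from these choices is, however, already guaranteed by the well-definedness of the augmented cup product (Proposition~\ref{well definednece of augmented cup product}) and of $\psi$ (\cite{Lich}*{Section 3}), so this should be a matter of careful bookkeeping rather than a genuine difficulty.
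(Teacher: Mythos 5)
Your proposal is correct and follows essentially the same route as the paper: the paper stacks the very same three compatibilities---$\rho$ with $\acup$ via (\ref{definition of rho}) and (\ref{rho_0 is acup}), $\acup$ with the ordinary cup product via Proposition \ref{compatibility with connecting homomorphism}(1) (with $r=0$), and that cup product with $\psi$ via the evaluation formula $(D,[f_{\sigma,\tau}])\mapsto[f_{\sigma,\tau}(D)]$---into the commutative diagram (\ref{4 rows compatibility}) and then chases ``compatible 4-tuples'', which is exactly your element-wise chain $\rho(x,h(\alpha))=x\acup\tilde\beta=\tilde x\cup\delta(\tilde\beta)=\tilde x\cup\phi(\alpha)=[f_{\sigma,\tau}(E)]=\psi(\iota x,\alpha)$. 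Your identification $\delta(\tilde\beta)=\phi(\alpha)$ rests on the same diagrams ((\ref{long exact diagram from div to pic}) and (\ref{factoring j, daigram})) that the paper packages into Lemma \ref{The image of phi is in the image of delta} and the 4-tuple argument, and your disjoint-support caveat is treated at the same level of rigor as in the paper, so the two proofs coincide in substance.
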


\begin{proof}
First note that the pairing (\ref{The pairing which give rise to the augmented cup product}), which gives rise to our augmented cup product $\rho_0$, induces a cup product
\begin{equation}
    \label{the induced cup product}
    \begin{split}
    \cup: \Div_0(X)=\Ho^{0}(G,\Div_0(\bar{X})) &\times \Ho^{2}(G,\bar{K}^*/\bar{k}^*) \to \Ho^{2}(G,\bar{k}^*)=\Br(k) \\
    (D,[f_{\sigma,\tau}]) &\mapsto [f_{\sigma,\tau}(D)].
\end{split}
\end{equation}
By the definitions of the pairings (\ref{definition of psi}) and (\ref{the induced cup product}), the following diagram is commutative:
\begin{equation}
\label{compatibility of cup and psi}
\begin{tikzcd}
\Div_0(X) \quad  
\times \quad
\Ho^{2}(G,\bar{K}^*/\bar{k}^*) \arrow[d, xshift=-10ex] \arrow[r, "\cup"] &
\Br(k) \arrow[d, equal] \\
\Pic(X) \qquad
\times \qquad
\Br(X) \quad \arrow[u, "\phi", xshift=8ex] \arrow[r, "\psi"] &
\Br(k).
\end{tikzcd}
\end{equation}
Further, by (\ref{definition of rho}) and (\ref{rho_0 is acup}), the diagram
\begin{equation}
\label{compatibility of rho with acup}
\begin{tikzcd}
\Pic_0(X) \qquad \quad
\times \quad
\Ho^{1}(G,\Pic(\bar{X})) \arrow[d,hook, xshift=-12ex] \arrow[r, "\rho"] &
\Br(k) \arrow[d, equal] \\
\Ho^{0}(G,\Pic_0(\bar{X})) \quad
\times \quad
\Ho^{1}(G,\Pic_0(\bar{X})) \arrow[u, xshift=10ex] \arrow[r, "\rho_0", "\acup"'] &
\Br(k)  
\end{tikzcd}
\end{equation}
is commutative. Finally, by Proposition \ref{compatibility with connecting homomorphism} (1), the diagram
\begin{equation}
\label{compatibility of acup with cup}
\begin{tikzcd}
\Ho^{0}(G,\Pic_0(\bar{X})) \quad
\times \quad
\Ho^{1}(G,\Pic_0(\bar{X})) \arrow[d, xshift=11ex, "\delta"] \arrow[r, "\acup"] &
\Br(k) \arrow[d, equal] \\
\quad \Div_0(X) \qquad  
\times \quad
\Ho^{2}(G,\bar{K}^*/\bar{k}^*) \arrow[u, xshift=-11ex] \arrow[r, "\cup"] &
\Br(k)
\end{tikzcd}
\end{equation}
is also commutative.
Combining (\ref{compatibility of cup and psi}), (\ref{compatibility of rho with acup}) and (\ref{compatibility of acup with cup}), we get a commutative diagram:
\begin{equation}
\label{4 rows compatibility}
\begin{tikzcd}
\quad \Pic_0(X) \qquad 
\times \quad
\Ho^{1}(G,\Pic(\bar{X})) \arrow[d,hook, xshift=-11ex] \arrow[r, "\rho"] &
\Br(k) \arrow[d, equal] \\
\Ho^{0}(G,\Pic_0(\bar{X})) \quad
\times \quad
\Ho^{1}(G,\Pic_0(\bar{X})) \arrow[u, xshift=11ex] \arrow[d, xshift=11ex, "\delta"] \arrow[r, "\acup"] &
\Br(k) \arrow[d, equal] \\
\quad \Div_0(X) \qquad  
\times \quad
\Ho^{2}(G,\bar{K}^*/\bar{k}^*) \arrow[u, xshift=-11ex] \arrow[d, xshift=-11ex] \arrow[r, "\cup"] &
\Br(k) \arrow[d, equal] \\
\Pic(X) \qquad
\times \quad \qquad
\Br(X)  \arrow[u, "\phi"', xshift=11ex] \arrow[r, "\psi"] &
\Br(k).
\end{tikzcd}
\end{equation}
We call a 4-tuple of elements in
\[
\Pic_0(X) \times \Ho^0(G,\Pic_0(\bar{X})) \times \Div_0(X) \times \Pic(X)
\]
\textit{compatible} if its elements map one to each other under the relevant maps in the left column of (\ref{4 rows compatibility}). Note that every element of $\Pic_0(X)$ can be completed to a compatible 4-tuple (since the map $\Div_0(X)\to \Pic_0(X)$ is surjective (diagram (\ref{diagram of 2 short exact sequences from div to pic}))).

Similarly, we call a 4-tuple of elements in
\[
\Ho^{1}(G,\Pic(\bar{X})) \times \Ho^{1}(G,\Pic_0(\bar{X})) \times \Ho^{2}(G,\bar{K}^*/\bar{k}^*) \times \Br(X)
\]
\textit{compatible} if its elements map one to each other under the relevant maps in the middle column of (\ref{4 rows compatibility}). By Lemma \ref{The image of phi is in the image of delta}, every element of $\Br(X)$ can be completed to a compatible 4-tuple. By the commutativity of (\ref{factoring j, daigram}), composing the maps in the middle column of (\ref{4 rows compatibility}) is equivalent to applying the homomorphism $h$, namely we have a commutative diagram: 
\[
    \begin{tikzcd} [column sep=small]
    \Br(X) \arrow[r, "\phi"'] \arrow[rrr, "h", bend left=10] &
    \Ho^{2}(G,\bar{K}^*/\bar{k}^*) &
    \Ho^{1}(G,\Pic_0(\bar{X})) \arrow[l,"\delta"] \arrow[r, two heads] &
    \Ho^{1}(G,\Pic(\bar{X})).
    \end{tikzcd}
\]

Now given elements of $\Pic_0(X)$ and $\Br(X)$, we complete them into compatible 4-tuples as above, and then use the commutativity of (\ref{4 rows compatibility}) to conclude that (\ref{compatibility of rho and psi}) commutes, as desired.
\end{proof}

\newpage
\begin{bibdiv}
\begin{biblist}

%\bib{Bogomolov91}{article}{
%    author={Bogomolov, F. A.},
%     title={On two conjectures in birational algebraic geometry},
% booktitle={Proc. of Tokyo Satellite conference ICM-90 Analytic and Algebraic Geometry},
%    volume={},
%      date={1991},
%     pages={26\ndash52},
%}

\bib{Efrat01}{article}{
    author={Efrat, Ido},
     title={A Hasse principle for function fields over PAC fields},
    journal={Israel J. Math.},
    volume={122},
      date={2001},
     pages={43\ndash60},
}

\bib{Jarden11}{book}{
  author={Jarden, Moshe},
  title={Algebraic Patching},
  publisher={Springer},
  place={Berlin},
  date={2011},
}

\bib{Jarden05}{book}{
  author={Jarden, Moshe},
  title={Field Arithmetic},
  publisher={Springer},
  place={Berlin},
  date={2005},
}

\bib{Koch02}{book}{
   author={Koch, Helmut},
   title={Galois Theory of $p$-Extensions},
%   series={Springer Monographs in Mathematics},
   publisher={Springer, Berlin},
   date={2002},
}

\bib{Lang83}{book}{
  author={Lang, Serge},
  title={Abelian Varieties},
  publisher={Springer},
  place={New York},
  date={1983},
}

\bib{Lang}{book}{
  author={Lang, Serge},
  title={Topics in Cohomology of Groups},
  publisher={Springer},
  place={Berlin},
  date={1996},
}

\bib{Lich}{article}{
    author={Lichtenbaum, Stephen},
     title={Duality theorems for curves over $p$-adic fields},
    journal={Inventiones math.},
    volume={7},
      date={1969},
     pages={120\ndash136},
}

\bib{Liu}{book}{
    author={Liu, Qing},
    title={Algebraic Geometry and Arithmetic Curves},
    year={2006},
    publisher={Oxford},
    place={New York}
}

\bib{Milne}{book}{
    author={Milne, J.S.},
    title={Arithmetic Duality Theorems},
    year={2006},
    publisher={BookSurge},
    place={LLC}
}

\bib{NSW}{book}{
  author={Neukirch, J{\"u}rgen},
  author={Schmidt, Alexander},
  author={Wingberg, Kay},
  title={Cohomology of Number Fields, Second edition},
  publisher={Springer},
  place={Berlin},
  date={2008},
}

\bib{Serre}{book}{
    author={Serre, J.P.},
    title={Galois Cohomology},
    year={1997},
    publisher={Springer},
    place={Berlin}
}

\bib{Silverman}{book}{
    author={Silverman, Joseph H.},
    title={The Arithmetic of Elliptic Curves},
    year={2009},
    publisher={Springer},
    place={New York}
}

\end{biblist}
\end{bibdiv}

\end{document}